\newtheorem{theorem}{Theorem}[section]
\newtheorem{thm}[theorem]{Theorem}
\newtheorem{lem}[theorem]{Lemma}
\newtheorem{pro}[theorem]{Proposition}
\newtheorem{cor}[theorem]{Corollary} 
\newtheorem{example}[theorem]{Example}
\newtheorem{defn}[theorem]{Definition} 
\theoremstyle{remark}
\newtheorem{remark}[theorem]{Remark}
\begin{document}
\title{Equationally defined classes of semigroups}
\author{Peter M. Higgins}
\address{Department of Mathematical Sciences, University of Essex, UK}
\email{peteh@essex.ac.uk}
\author{Marcel Jackson}
\address{Department of Mathematical and Physical Sciences, La Trobe University, VIC 3086, Australia}
\email{m.g.jackson@latrobe.edu.au}
\keywords{Equational classes; regular semigroups; Makanin's algorithm}
\subjclass[2000]{Primary: 20M07, Secondary: 03C05, 20M17}
%\date{{}}
\begin{abstract}
We apply, in the context of semigroups, the main theorem from~\cite{higjac}
that an elementary class $\mathcal{C}$ of algebras which is closed under
the taking of direct products and homomorphic images is defined by
systems of equations. We prove a dual to the Birkhoff theorem in that
if the class is also closed under the taking of containing semigroups,
some basis of equations of $\mathcal{C}$ is free of the $\forall$ quantifier.
  We also observe the decidability of the class of equation systems satisfied by semigroups, via a link to systems of rationally constrained equations on free semigroups. Examples are given of EHP-classes for which neither $(\forall \cdots)(\exists \cdots )$ equation systems nor $(\exists \cdots)(\forall \cdots)$ systems suffice. 
\end{abstract}
\maketitle

\section{Introduction}\label{sec:intro}

A theorem of universal algebra and model theory is proved in \cite{higjac}
that is particularly pertinent to the study of semigroups. The theorem
clarifies why it is so often the case that classes of algebras which
are closed under the taking of arbitrary direct products (P) and homomorphic
images (H), but not necessarily subalgebras~(S), may be defined by
a set of equation-like sentences. The members of these bases are referred to in
\cite{higjac} as \emph{equation systems} and generally involve simultaneous
equations, meaning that the equations are not assumed to be independent
in that a symbol may occur in more than one equation.   More formally,  a semigroup equation system is a quantified conjunction of equalities between semigroup words (elements of a free semigroup).  Thus in comparison with the more familiar theory of varieties and equational logic, we allow both $\forall$ and $\exists$ as quantifiers at the front of the sentence and we allow the logical connective of conjunction $\wedge$ (``and'').  %Constants are allowed only inasmuch as they are in the signature.

For instance, Example 2.1(ii) of~\cite{higjac} shows that Completely regular semigroups
($\mathcal{CR}$) is the class consisting of all semigroups in which the following  
 equation pair may be solved: 
\begin{equation}
(\forall\,a)(\exists\,x)\colon  (a=axa) \wedge (ax = xa).\label{eq:cr}
\end{equation}

These equations are simultaneous and indeed both the parameter $a$ and the variable $x$ feature in each.  
Together they capture the property that each element has an inverse with which it commutes, which is one definition of completely regular semigroups. 
However, we shall show that this class may be defined by a single quantified equation (without conjunction), and in more than one way. 

Classes defined by equation systems in this fashion are referred to as $\{E,H,P\}$-classes or simply $EHP$-classes, with
the $E$ symbol standing for the taking of \emph{elementary substructures}, $H$ for \emph{homomorphic images} and $P$ for \emph{direct products}.   An \emph{elementary
class} (or \emph{first order class}) is one defined by a collection
of first order formulae: the quantifiers refer to elements of algebras, hence ``elementary'', in contrast with second order logic where we may quantify over relations.  The inclusion of the class operator $E$ is necessary to capture equivalence with definability by equation systems, but the reader will not need familiarity with the definition of elementary substructures in this article, because we always proceed by finding equation systems that capture the classes we explore.
It is not currently known if closure under
the operator $EHP$ is enough to ensure closure under the set of operators
$\{E,H,P\}$. 

For relevant background and notation on universal algebra see \cite{ber}, \cite{bursan} (Chapter~V there also serves as a useful first introduction to model theoretic notions for those wanting to delve further), while
 \cite{clipreI},  \cite{hig} and \cite{how} are texts covering semigroup facts and terminology.

In the following theorem, closure under taking elementary substructures is subsumed by the assumption that $\mathcal{C}$ is an elementary class (all elementary classes are closed under the taking of elementary substructures).
\begin{thm}\label{thm:EHP} \cite[Theorem 3.1]{higjac} An elementary class $\mathcal{C}$
equals the class of models of some family of equation systems if and
only if $\mathcal{C}$ is closed under taking homomorphic images of direct
products. If the elementary class is the model class of a single sentence,
then it is a class of models of a single equation system.
\end{thm}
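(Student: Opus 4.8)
The plan is to prove both implications, treating the easy ``equation systems $\Rightarrow$ closure under $\mathbf{HP}$'' direction by direct verification and the substantive converse by a preservation-theorem argument built on compactness. For the forward direction, note that an equation system is a prenex sentence $Q_1x_1\cdots Q_nx_n\,\bigwedge_k (s_k=t_k)$ whose matrix is a conjunction of equalities. Such a sentence is \emph{positive}, so I would first check it is preserved by surjective homomorphisms: images of existential witnesses witness the image statement, while surjectivity lets each universally quantified variable in the image be pulled back to the domain. Preservation under direct products I would verify by coordinatewise witness selection: since a conjunction of equalities holds in $\prod_{i}A_i$ exactly when it holds in each coordinate, a witness for an $\exists$ in $\prod_i A_i$ can be assembled from coordinatewise witnesses, and this respects the quantifier alternation because in each coordinate the choice of a witness depends only on the already-chosen coordinate values. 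Finally, equation systems are first order, so their model class is elementary; together this gives closure under $\mathbf{E}$, $\mathbf{H}$ and $\mathbf{P}$, hence under $\mathbf{HP}$.

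For the converse, let $\Sigma$ be the set of all equation systems satisfied by every member of $\mathcal{C}$, so that $\mathcal{C}\subseteq\mathrm{Mod}(\Sigma)$, and the task reduces to showing $\mathrm{Mod}(\Sigma)\subseteq\mathcal{C}$. Fix $M\models\Sigma$. Because $\mathcal{C}$ is elementary (hence closed under elementary equivalence) and closed under $\mathbf{HP}$, it suffices to produce an elementary extension $M^{*}\succeq M$ that is a homomorphic image of a direct product $\prod_{i\in I}A_i$ with each $A_i\in\mathcal{C}$; then $M^{*}\in\mathbf{HP}(\mathcal{C})=\mathcal{C}$ and $M\equiv M^{*}$ forces $M\in\mathcal{C}$. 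I would build such an $M^{*}$ and surjection by a compactness argument over the elementary diagram of $M$: one introduces, for each element of $M^{*}$, a name for a coordinatised preimage, requires every equality holding in $M$ to hold in each coordinate while every inequality of $M$ is broken in at least one coordinate, and then checks that each finite package of these demands can be realised inside $\mathcal{C}$.

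The hard part will be this realisation step, and it is exactly where closure under $\mathbf{H}$ and $\mathbf{P}$ must both be used. A naive separation of a required inequality produces an implication ``$(\bigwedge \text{equalities})\to(s=t)$'', i.e.\ a quasi-identity, which only reflects closure under $\mathbf{S}$ and $\mathbf{P}$ and would give a quasivariety rather than an $EHP$-class. The point of homomorphic-image freedom (positivity) is that no negative hypotheses are available, so the obstruction to realising a finite demand cannot be an implication; and the point of products is that a \emph{disjunction} of atomic separations needed across the named preimages can be met simultaneously by choosing, coordinate by coordinate, members of $\mathcal{C}$ each breaking one disjunct. If some finite demand were \emph{not} realisable in $\mathcal{C}$, I would argue that its unsolvability is witnessed by a single equation system (a quantified conjunction of equalities, with the $\forall/\exists$ pattern read off from which names are preimages of fixed versus chosen elements) that holds throughout $\mathcal{C}$ but fails in $M$, contradicting $M\models\Sigma$. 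Keeping track of the quantifier alternation and of the simultaneity forced by shared variables through this diagram construction is the most delicate bookkeeping.

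For the final assertion, suppose $\mathcal{C}=\mathrm{Mod}(\varphi)$ for a single first-order sentence $\varphi$. By the converse just proved $\mathcal{C}=\mathrm{Mod}(\Sigma)$, so $\Sigma\models\varphi$, and compactness yields a finite $\Sigma_0\subseteq\Sigma$ with $\Sigma_0\models\varphi$; since also $\varphi\models\Sigma_0$ we get $\mathcal{C}=\mathrm{Mod}(\Sigma_0)$. It then remains to observe that a finite conjunction of equation systems is again an equation system: after renaming the bound variables of the members apart, the quantifier prefixes may be concatenated in front of the conjunction of their matrices, which is still a conjunction of equalities. Hence $\mathcal{C}$ is the model class of a single equation system.
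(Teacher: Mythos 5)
Your forward direction and your reduction of the single-sentence case (compactness, plus the fact that a conjunction of equation systems with bound variables renamed apart prenexes to a single equation system) are correct and match the paper. The gap is in the converse, which is where all the content lies. The construction you propose --- name a coordinatised preimage for each element of $M^{*}$, impose diagram and separation conditions, and realise finite packages in $\mathcal{C}$ --- determines a map only on the subsemigroup of $\prod_{i\in I}A_i$ generated by the named preimages. Even if every demand were realised, you would obtain $M^{*}$ as a homomorphic image of a \emph{subalgebra} of a product (indeed your two conditions, positive diagram of $M$ in every coordinate plus inequalities broken in some coordinate, are exactly the classical $\mathbf{ISP}$ embedding construction used for quasivarieties), so you land in $\mathbf{HSP}(\mathcal{C})$, not $\mathbf{HP}(\mathcal{C})$; without closure under $\mathbf{S}$ this proves nothing, and a homomorphism defined on a subalgebra of a product does not in general extend to the full product. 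This is precisely the quasivariety trap you flag yourself; flagging it does not avoid it. There is also a more basic obstruction: ``is a full direct product of members of $\mathcal{C}$'' is not a first-order property, so no compactness argument can deliver the domain of the required surjection. Compactness produces models of theories (which is why it can produce single coordinates $A_i\in\mathcal{C}$, membership in the elementary class $\mathcal{C}$ being first-order), but it cannot certify that every choice function across infinitely many coordinates is present, which is what a homomorphism defined on all of $\prod_{i\in I}A_i$ requires. Incidentally, requiring every equality of $M$ to hold in each coordinate is also misguided: it amounts to demanding a homomorphism from $M$ into each $A_i$, which is neither needed for $f_m\mapsto m$ to be a homomorphism (homomorphisms may collapse) nor generally available.

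The second gap is the step where an unrealisable finite demand is ``witnessed by a single equation system that holds throughout $\mathcal{C}$ but fails in $M$''. A priori, unrealisability of a finite demand in $\mathcal{C}$ says only that every member of $\mathcal{C}$ satisfies a universally quantified implication whose conclusion is a disjunction of equalities; converting such a sentence into a quantified conjunction of equalities is exactly the theorem being proved, so as written this step is circular. The paper separates the two difficulties and resolves each with a concrete tool: Lyndon's positivity theorem (an elementary class closed under surjective homomorphic images is axiomatised by positive sentences) removes negations and implications, and then a purely finitary argument removes disjunctions --- if no single disjunct of a positive prenex sentence can replace its disjunction throughout $\mathcal{C}$, choose a counterexample $M_j\in\mathcal{C}$ for each disjunct $j$, form the finite product $\prod_{j}M_j\in\mathcal{C}$, and project a satisfying assignment onto some coordinate to reach a contradiction. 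A repaired version of your argument would need at minimum a reproof of (a semantic form of) Lyndon's theorem together with this product-projection argument; both are currently hidden inside ``I would argue''.
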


The theorem was inspired by the observation that so many of the fundamental
classes of algebraic semigroups are EHP-classes, but are not varieties,
which is to say the class is \emph{not }closed under the taking of
subsemigroups, and so cannot be defined by semigroup identities. Indeed
since equation systems are examples of first order sentences, a class
defined by the satisfaction of a set of equations is automatically
an elementary class and the initial condition of the theorem is, in
the (easy) forward direction, redundant.

In Section 2 we highlight examples showing just how rich is EHP
theory in the context of semigroups. Many extensively studied semigroup
classes are captured through our approach, in many cases by a single
equation. We verify this in cases where the equational bases differ
from those given in \cite{higjac}. 

An EHP-class $\mathcal{C}$ that is closed under the taking of all embeddings
(and not only elementary embeddings) is a variety in which case, by
Birkhoff's theorem, the equations defining $\mathcal{C}$ may be taken
to be identities, which is to say the equations do not involve the
quantifier $\exists$. It is natural then to consider a kind of dual
to the Birkhoff theorem for EHP-classes $\mathcal{C}$ defined by equations
that are free from the quantifier $\forall$. Clearly such a class
is closed under the taking of containing algebras. In Section 3 we
prove the converse does indeed hold for the class of semigroups.

In Theorem~\ref{thm:EHP}, the number of alternations between the $\forall$
symbol, which qualifies \emph{parameters}, (denoted by lower case
letters from the beginning of the alphabet $a,b,c,\dots$), and the
$\exists$ symbol, which qualifies \emph{variables}, (denoted typically
by $x,y,z$) is finite but unbounded in length. In Section 4 we give
examples of EHP-classes  which have no basis comprised of equations of the form $(\forall \cdots)(\exists \cdots)$ nor the form $(\exists \cdots)(\forall \cdots)$. 
\section{Classical semigroup collections as EHP-classes}\label{sec:classical}

\subsection{Classes of regular semigroups}\label{subsec:reg}

We begin with six examples that highlight how easily important classes
of regular semigroups may be characterised by a single equation. Moreover
the proofs of this are simple but elegant exercises in semigroup theory.
In particular we see in Proposition \ref{pro:reg} how the classes in question
may be characterised by small adjustments to the equation that defines
regularity. 

\begin{pro}\label{pro:reg}
The classes of Regular semigroups ($\mathcal{R}eg$),
Left groups ($\mathcal{LG}$), Right groups ($\mathcal{RG}$), Groups ($\mathcal{G}$),
Completely regular semigroups ($\mathcal{CR}$), and Completely simple
semigroups ($\mathcal{CS}$) are the EHP-classes of semigroups defined
by the following equations. 
\begin{equation}
\mathcal{R}eg:(\forall a)(\exists x)\colon a=axa.\label{pro:reg1}
\end{equation}
\begin{equation}
\mathcal{LG}:(\forall a,b)(\exists x)\colon a=axb,\,\,\mathcal{RG}:(\forall a,b)(\exists x)\colon a=bxa.\label{pro:reg2}
\end{equation}
\begin{equation}
\mathcal{G}:(\forall a,b)(\exists x)\colon a=bxb.\label{pro:reg3}
\end{equation}
\begin{equation}
\mathcal{CR}:(\forall\,a)(\exists x)\colon a=a^{2}xa^{2}.\label{pro:reg4}
\end{equation}
\begin{equation}
\mathcal{CS}:(\forall a,b)(\exists x)\colon  a=abxba.\label{pro:reg5}
\end{equation}
\end{pro}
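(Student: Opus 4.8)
The plan is to treat each of the six equations by establishing the two inclusions separately: that every member of the named class satisfies the displayed equation, and conversely that any semigroup satisfying it lies in the class. For the forward inclusions I would invoke the standard structural descriptions — left groups, right groups and groups as $L\times G$, $R\times G$ and $G$ (with $L$ a left-zero band, $R$ a right-zero band and $G$ a group), completely regular semigroups as unions of groups, and completely simple semigroups as Rees matrix semigroups $\mathcal{M}(G;I,\Lambda;P)$ — and then exhibit the required witness $x$ coordinate-by-coordinate. For instance, in $L\times G$ one checks that $x=(\ell,g^{-1})$ solves $a=axb$ once $b$ has group coordinate $g$, the left-zero coordinate being absorbed automatically; in a group one takes $x=b^{-1}ab^{-1}$; and for completely regular $S$, working inside the subgroup containing $a$ one takes $x=a^{-3}$, so that $a^2xa^2=a$.

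The reverse inclusions carry the uniform idea. Each equation asserts that $a$ lies in a specified product set, and reading off a one-sided or two-sided containment immediately yields an ideal-theoretic condition. From $a=axb$ one gets $a\in aSb\subseteq Sb$, so $Sb=S$ for every $b$ and $S$ is \emph{left simple}; dually $a=bxa$ gives right simplicity, $a=bxb$ gives both (hence $aS=Sa=S$ for all $a$), and $a=abxba$ gives $a\in SbS$, so $SbS=S$ and $S$ is \emph{simple}. Substituting $b=a$ in each of \eqref{pro:reg2}, \eqref{pro:reg3} and \eqref{pro:reg5} collapses the equation to $a=axa$ (respectively to the $\mathcal{CR}$-equation $a=a^2xa^2$), so regularity, and hence the presence of idempotents, comes for free. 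I would then apply the classical structure theorems: a left (right) simple semigroup containing an idempotent is a left (right) group; a semigroup in which $aS=Sa=S$ for every $a$ is a group; and a simple semigroup that is completely regular is completely simple.

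The equation \eqref{pro:reg4} is handled slightly differently, as there is no parameter $b$ to exploit. Here $a=a^2xa^2$ gives directly $a\in a^2S\cap Sa^2$, and I would invoke the Green's-relations fact that $a\in a^2S^1\cap S^1a^2$ is equivalent to $a\,\mathcal{H}\,a^2$, which forces the $\mathcal{H}$-class of $a$ to be a subgroup; as $a$ is arbitrary, $S$ is a union of groups, that is, completely regular. This same equation then reappears inside the $\mathcal{CS}$ argument via the substitution $b=a$, so the completely regular case is logically prior.

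I expect the main obstacle to be bookkeeping rather than anything conceptual. The forward direction for $\mathcal{CS}$ requires an explicit, somewhat lengthy computation in Rees coordinates to verify that the resulting single equation for the group coordinate of $x$ is solvable, which it is precisely because the structure matrix entries $p_{\lambda j}$ are invertible in $G$; and the reverse direction for $\mathcal{CR}$ depends on correctly citing the equivalence between the membership $a\in a^2S\cap Sa^2$ and $a\,\mathcal{H}\,a^2$. Both are routine given the structure theory, and the unifying theme — translate each equation into an ideal condition, extract regularity by the substitution $b=a$, then apply a classical characterisation — renders the six cases essentially parallel.
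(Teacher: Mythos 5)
Your proposal is correct, and its converse directions essentially coincide with the paper's: both translate each equation into an ideal-theoretic or Green's-relations condition (your $Sb=S$, $SbS=S$ are the paper's $a\leq_{\mathscr{L}}b$, $J_a\leq_{\mathscr{J}}J_b$ in other clothing), both extract regularity or the $\mathcal{CR}$-equation by the substitution $b=a$, and both then cite the classical characterisations (left simple plus regular/idempotent is a left group; left and right simple is a group; $a\mathrel{\mathscr{H}}a^2$ for all $a$ forces a union of groups; simple plus completely regular is completely simple). Where you genuinely diverge is in the forward inclusions: you invoke the structure theorems — $L\times G$ for left groups and the Rees matrix representation $\mathcal{M}(G;I,\Lambda;P)$ for completely simple semigroups — and exhibit the witness $x$ by solving for its group coordinate, which works precisely because the sandwich-matrix entries are invertible. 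The paper instead argues intrinsically: for $\mathcal{LG}$ it composes a left-simplicity witness $a=yb$ with a regularity witness $a=aza$ to get $x=zy$; for $\mathcal{CS}$ it applies Green's Lemma to see that the translations $\rho_{ba}$ and $\lambda_{ab}$ are bijections of $H_a$, so their composite $x\mapsto abxba$ is a bijection and a (unique) solution $x\in H_a$ exists. Your route buys concrete, mechanical verifications at the cost of heavier machinery (the Rees--Suschkewitsch theorem) and the coordinate bookkeeping you yourself flag; the paper's route is coordinate-free, shorter, and as a bonus yields uniqueness of the witness in $H_a$ for the completely simple case. Both are complete proofs.
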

\begin{proof} The equation \eqref{pro:reg1} may be satisfied in any regular
semigroup $S$ by taking $x\in V(a)$. Conversely, if $x$ satisfies
\eqref{pro:reg1}, then $xax\in V(a)$. 

For equation \eqref{pro:reg2}, let $S$ be a left group and let $a,b\in S$. Since $S$ is left
simple, there exists $y\in S$ such that $a=yb$. By regularity there
exists $z\in S$ such that $a=aza$, whence $a=azyb$. Putting $x=zy$
gives $a=axb$, as required. 

Conversely suppose that $a=axb$ is solvable in a semigroup $S$.
Putting $b=a$ gives $a=axa$ is solvable so that $S$ is regular.
Moreover $a=axb$ implies that $a\leq_{\mathscr{L}}b$, and since $a,b$
are arbitrary it follows that $a\mathrel{\mathscr{L}}b$. Therefore $S$ is left
simple and regular and so $S$ is a left group. The left-right dual
argument shows that $\mathcal{RG}$ is defined by the equation $a=bxa$ in like manner.

For equation \eqref{pro:reg3}, if $G$ is a group then for given $a,b\in G$ there is a unique
solution to \eqref{pro:reg3}, that being $x=b^{-1}ab^{-1}$. 

Conversely, let $S$ be a semigroup in which \eqref{pro:reg3} is solvable. The
equation implies that $H_{a}\leq H_{b}$. By interchanging $a$ and
$b$ we obtain the reverse inequality, whence $S$ consists of a single
$\mathscr{H}$-class, and is therefore a group. 

For equation \eqref{pro:reg4}: from the equation, it follows that $a\mathrel{\mathscr{H}}a^{2}$, from which
we infer that every $\mathscr{H}$-class is a group, whence $S$ is a
union of groups, which is to say that $S$ is completely regular. 

Conversely, let $S$ be a completely regular semigroup. Let $a\in S$
and put $x=b^{3}$, where $b$ is the inverse of $a$ in the group
$H_{a}$. Then $ab=ba$ is the identity element of $H_{a}$ and so
\[
a^{2}xa^{2}=a^{2}b^{3}a^{2}=a(ab)b(ba)a=aba=a,
\]
in accord with \eqref{pro:reg4}. 

Finally, let $S$ be a semigroup that satisfies \eqref{pro:reg5}. By putting $b=a$
we see that~\eqref{pro:reg5} implies \eqref{pro:reg4}, so that $S$ is completely regular. For
any $a,b\in S$, \eqref{pro:reg5} implies that $J_{a}\leq_{\mathscr{J}}J_{b}$, and
by role reversal of $a$ and $b$, the reverse inequality follows
so that that $J_{a}=J_{b}$. Therefore $S$ is a simple completely
regular semigroup, which is to say that $S$ is completely simple. 

Conversely let $S$ be a completely simple semigroup and take $a,b\in S$.
Then we have $a\mathrel{\mathscr{R}}ab\mathrel{\mathscr{L}}b\mathrel{\mathscr{R}}ba\mathrel{\mathscr{L}}a$.
By Green's Lemma, the mapping $\rho_{ba}:H_{a}\rightarrow H_{a}$
whereby $x\rho_{ba}=xba$ is a bijection, as is the the mapping $\lambda_{ab}:H_{a}\rightarrow H_{a}$
whereby $x\lambda_{ab}=abx$. It follows that $\phi=\lambda_{ab}\rho_{ba}=\rho_{ba}\lambda_{ab}:H_{a}\rightarrow H_{a}$
is also a bijection, whereupon there exists a unique $x\in H_{a}$
such that $x\lambda_{ab}\rho_{ba}=abxba=a$, thereby proving that
$S$ satisfies equation \eqref{pro:reg5}.
\end{proof}

Other classes of regular semigroups may be defined by an equation
system consisting of \eqref{pro:reg1} together with one more equation. For instance,
Example 2.1(iii) of~\cite{higjac} shows that \emph{Semilattices of groups}
($\mathcal{SG}$) is the EHP-class of regular semigroups defined by the
additional equation:
\begin{equation}
(\forall\,a,b)(\exists\,y)\colon  ab=bya.\label{eq:abbya}
\end{equation}

Some standard properties used in the description of classes of semigroups
may be expressed by equations, and that allows for abbreviation. For
example, that a certain product $u$ of some parameters and variables
is idempotent we write as $u\in E$, or if $v$ is an inverse of $u$
we write $v\in V(u)$. Properties defined by Green's relations are
generally not intrinsically equational within the class of semigroups
but may become so in the presence of the regularity equation \eqref{pro:reg1}.
However the respective properties of being $\mathscr{G}$-simple for
any of the five Green's relations $\mathscr{G}$ defines an EHP-class,
except for the case of $\mathscr{D}$. In general, bisimple semigroups
are not closed under the taking of direct products. Within $\mathcal{R}eg$,
the condition $u\mathrel{\mathscr{H}}v$ is expressible through equations in
$S$, with similar comments applying to $\mathscr{L},\mathscr{R},$ $\mathscr{J}$,
and indeed $\mathscr{D}$. In particular, the class of regular bisimple
semigroups is defined by the regularity equation together with the
equational relationships $(\forall a,b)(\exists x)\colon a\mathrel{\mathscr{R}}x\mathrel{\mathscr{L}}b$.
This class is however not a variety as, by a theorem of Preston, any
semigroup may be embedded in a regular bisimple monoid \cite[Corollary
1.2.15]{hig}.

The property of $u$ belonging to a subgroup, which we write as $u\in G$,
is also equational: 
\[
(\exists x)\colon (x\in V(u))\wedge(ux=xu).
\]

The ascending chain of the three important classes of $\mathcal{I}$
(\emph{Inverse semigroups}), $\mathcal{O}$ (\emph{Orthodox semigroups}),
and $\mathcal{ES}$, (\emph{Idempotent-solid semigroups}), which are
those regular semigroups whose idempotent generated subsemigroup is a union
of groups, may be defined in a uniform fashion that is conveniently
displayed if we adjoin two redundant equation types to the definition
of regularity: 
\begin{equation}
\text{\textbf{reg}:\,\ensuremath{(\forall\,a,b)(\exists\,x,u,v)\colon (x\in V(a))\wedge(u\in V(a^{2}))\wedge(v\in V(b^{2})).}}\label{eq:reg}
\end{equation}
%%%%%%Adding this:
We include two further classes within this sequence.  For the first, we have the \emph{Right Inverse semigroups} introduced by Venkatesan \cite{ven} as regular semigroups in which each $\mathscr{L}$-class contains a unique idempotent (for that reason, they are also known as \emph{$\mathscr{L}$-unipotent semigroups}).   The class $\mathcal{RI}$  of Right inverse semigroups is given six further characterisations in \cite[Theorem~2.1]{ven}, one of which is the class of all regular semigroups $S$ for which $efe = fe$ for any idempotents $e,f$ in $S$.  It follows that $\mathcal{I} \subseteq  \mathcal{RI} \subseteq \mathcal{O}$. It also is the case that $\mathcal{RI}$ is an EHP class, and the argument for closure under homomorphisms is given in Theorem 3 of \cite{ven}.

The second inclusion in the chain is the class $\mathcal{CN}$ of \emph{Conventional semigroups} of  Masat \cite{mas}: a regular semigroup $S$  is \emph{conventional} if $aea'$ is idempotent for all $(a,a') \in V(S)$ and $e\in E(S)$.  Equivalently, by \cite[Lemma 2.2]{mas}, if $eEe\subseteq E$, which compares as a natural weakening of the $efe = fe$ condition of Right inverse semigroups.  The Conventional semigroup definition is also a weakening of the Orthodox semigroup definition, and so $\mathcal{O} \subseteq \mathcal{CN}$.  A consequence of Masat's Lemma 2.2 and Lallement's Lemma is that $\mathcal{CN}$ is closed under the taking of homomorphisms \cite[Lemma 3.1]{mas}.  Since $\mathcal{CN}$ is clearly closed under direct products and is an elementary class, it follows that $\mathcal{CN}$ is an EHP-class. 
%%%%%%
 With the symbols $a,b,u,v$ satisfying the equations of \textbf{reg},
we may define each these five classes by means of one additional equation (to be included within the scope of the quantification of~$\mathbf{reg}$).  The following proposition is \cite[Theorem 5.2]{higjac}, extended by the inclusion of~$\mathcal{RI}$ and~$\mathcal{CN}$.

\begin{pro}\label{pro:inv} The classes of Inverse
semigroups, Right inverse semigroups, Conventional semigroups, Orthodox semigroups, and Idempotent solid semigroups,
are EHP-classes defined by the following equational bases.
\begin{equation}
\mathcal{I}: \textbf{reg} \wedge aua\cdot bvb=bvb\cdot aua;\label{pro:inv:I}
\end{equation}
\begin{equation}
\mathcal{RI}: \textbf{reg} \wedge aua\cdot bvb\cdot aua = bvb\cdot aua;\label{pro:inv:RI}
\end{equation}
\begin{equation}
\mathcal{O}: \textbf{reg} \wedge aua\cdot bvb\in E;\label{pro:inv:O}
\end{equation}
\begin{equation}
\mathcal{CN}: \textbf{reg} \wedge \ aua\cdot bvb\cdot aua\in E;\label{pro:inv:CN}
\end{equation}
\begin{equation}
\mathcal{ES}: \textbf{reg}\wedge aua\cdot bvb\in G.\label{pro:inv:EH}
\end{equation}
\end{pro}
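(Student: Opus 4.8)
The plan is to prove each of the five characterisations as a biconditional, in each case assuming that $S$ is a regular semigroup (so that the equations of \textbf{reg} are available) and then interpreting the extra equation. The key observation making everything uniform is that, inside the scope of \textbf{reg}, the products $aua$, $bvb$ and $aua\cdot bvb\cdot aua$ are idempotents: since $u\in V(a^2)$ we have $(aua)(aua)=a(ua^2u)a=aua$ because $ua^2u\in V(a^2)\cdot\ldots$ — more directly, $aua$ is the canonical idempotent $a\cdot u\cdot a$ arising from $u\in V(a^2)$, and one checks $aua\in E(S)$ by a short computation. Conversely, every idempotent $e$ of $S$ is of the form $aua$ for a suitable choice of parameter $a=e$ and witness $u$: taking $a=e$, the element $u=e$ lies in $V(e^2)=V(e)$ and $aua=e\cdot e\cdot e=e$. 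Hence as $a,b$ range over $S$ and $u\in V(a^2)$, $v\in V(b^2)$ range over their witnesses, the products $aua$ and $bvb$ range \emph{exactly} over the idempotents $E(S)$.

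\medskip

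With that reduction in hand, each equation becomes a familiar intrinsic condition on pairs of idempotents. First I would record the idempotency claim as a short lemma-style computation and the surjectivity-onto-$E(S)$ claim just noted; these are the engine for all five cases. Then:
for \eqref{pro:inv:I}, the added equation says $ef=fe$ for all $e,f\in E(S)$, i.e.\ the idempotents commute, which together with regularity is the classical characterisation of inverse semigroups.
For \eqref{pro:inv:RI}, the equation reads $efe=fe$ for all $e,f\in E(S)$, which is precisely the $\mathscr{L}$-unipotent condition identified in \cite[Theorem~2.1]{ven}, so the models are the right inverse semigroups.
For \eqref{pro:inv:O}, the equation asserts $ef\in E(S)$ for all idempotents $e,f$, i.e.\ $E(S)$ is a subsemigroup, which is the definition of an orthodox semigroup.
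For \eqref{pro:inv:CN}, the equation says $efe\in E(S)$, i.e.\ $eEe\subseteq E$, which is Masat's equivalent formulation \cite[Lemma 2.2]{mas} of conventionality.
For \eqref{pro:inv:EH}, the equation says $ef\in G$, i.e.\ every product of two idempotents lies in a subgroup; since $E(S)$ generates $\langle E(S)\rangle$ and complete regularity of a subsemigroup is equivalent to each of its elements lying in a subgroup, this pins down exactly the idempotent-solid semigroups (using that the idempotent-generated subsemigroup is a union of groups iff each $ef$ is in a subgroup).

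\medskip

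In each direction I would argue as follows: if $S$ lies in the named class, then regularity gives the witnesses $x\in V(a)$, $u\in V(a^2)$, $v\in V(b^2)$ required by \textbf{reg}, and the class-defining property, applied to the idempotents $e=aua$, $f=bvb$, yields the extra equation. Conversely, if $S$ satisfies the displayed equation system then $S$ is regular (the clause $x\in V(a)$ already forces this), and by the surjectivity claim the added equation holds for \emph{every} pair of idempotents, giving the intrinsic property that defines the class. That the resulting class is an EHP-class is then immediate, since it is cut out by equation systems; closure under $H$ in the genuinely non-varietal cases $\mathcal{RI}$ and $\mathcal{CN}$ is exactly what is supplied by \cite[Theorem~3]{ven} and \cite[Lemma 3.1]{mas} as recorded above, and the remaining operators $E,P$ are clear.

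\medskip

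I expect the main obstacle to be the bookkeeping that makes $aua$ (and its longer cousin $aua\cdot bvb\cdot aua$) idempotent and the verification that these products sweep out all of $E(S)$ and no more; getting the quantifier bookkeeping right — that \textbf{reg} quantifies $a,b$ universally and $x,u,v$ existentially, so that after fixing idempotents $e,f$ we may legitimately specialise $a=e$, $b=f$ — is the delicate point, since it is what licenses reading a statement about the specific products $aua,bvb$ as a statement about arbitrary idempotents. The idempotency of the triple product in \eqref{pro:inv:RI} and \eqref{pro:inv:CN} will need the case-by-case regular-semigroup identities (e.g.\ that $efe$ need not be idempotent in general, so the equation is a genuine restriction), and I would treat $\mathcal{CN}$ with care, checking that the equational form $aua\cdot bvb\cdot aua\in E$ matches Masat's $eEe\subseteq E$ rather than the stronger $efe=fe$ of the right inverse case.
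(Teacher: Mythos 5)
Your overall strategy is essentially the paper's: observe that under \textbf{reg} the products $aua$ and $bvb$ are idempotent, translate each added equation into the classical idempotent condition for the class ($ef=fe$ for $\mathcal{I}$, $efe=fe$ for $\mathcal{RI}$, $ef\in E$ for $\mathcal{O}$, $efe\in E$ for $\mathcal{CN}$, $ef\in G$ for $\mathcal{ES}$), and prove both directions by specialising the parameters to idempotents. (The paper only writes out the $\mathcal{CN}$ case, citing \cite[Theorem 5.2]{higjac} for $\mathcal{I}$, $\mathcal{O}$, $\mathcal{ES}$ and omitting $\mathcal{RI}$ as similar; your plan to treat all five uniformly is reasonable.)

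However, your converse direction contains a genuine logical misstep. You justify ``the added equation holds for every pair of idempotents'' by your \emph{surjectivity} claim: every $e\in E(S)$ equals $aua$ under the choice $a=e$, $u=e$. But satisfaction of the equation system only asserts that for each assignment of the parameters \emph{some} witnesses $u\in V(a^2)$, $v\in V(b^2)$ exist making the whole conjunction true; you do not get to choose $u=e$, $v=f$ yourself, so surjectivity of the map $(a,u)\mapsto aua$ is not the fact you need. What is needed --- and what the paper's computation supplies --- is witness \emph{independence} at idempotent parameters: if $a=e\in E(S)$, then for \emph{every} $u\in V(e^2)=V(e)$ the defining property of inverses gives $eue=e$. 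Hence, whatever witnesses the satisfied system hands you for the parameters $a=e$, $b=f$, the products occurring in the added equation are forced to equal $e$ and $f$ themselves, and the equation becomes exactly the intrinsic condition (e.g.\ $efe\in E$ for $\mathcal{CN}$); this is precisely the content of the paper's chain $aba=a^2b^2a^2=a^2ua^2\cdot b^2vb^2\cdot a^2ua^2=aua\cdot bvb\cdot aua$. The repair is a one-line change --- replace ``for the suitable choice $u=e$'' by ``for any $u\in V(e)$'' --- and with it the rest of your argument (the idempotency of $aua$, the identification of each equation with the classical characterisation, and the appeals to \cite{ven} and \cite{mas} for $\mathcal{RI}$ and $\mathcal{CN}$) goes through and matches the paper's proof.
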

\begin{proof}
The cases of $\mathcal{I}$, $\mathcal{O}$ and $\mathcal{ES}$ are given in \cite[Theorem 5.2]{higjac}.  The proof for $\mathcal{CN}$ is indicative of the approach, and we omit the very similar argument for $\mathcal{RI}$.  Recall that $\mathbf{reg}$ includes the conditions $u\in V(a^2)$ and $b\in V(b^2)$ so that $aua$ and $bvb$ are idempotents.  Thus any conventional semigroup satisfies the given equation system because of the  $eEe\subseteq E$ condition of \cite[Lemma~2.2]{mas}.   Conversely if $S$ satisfies the equations then $S$ is regular and for any two idempotents $a$ and $b$ we have:
 \[
 aba = a^2b^2a^2 = a^2ua^2\cdot b^2vb^2\cdot a^2ua^2 = aua\cdot bvb\cdot aub,
 \]  
 which is idempotent.
 It follows by \cite[Lemma~2.2]{mas} that we have an equational basis for Conventional semigroups.
\end{proof}
The equational bases in Propositions \ref{pro:reg} and \ref{pro:inv} are not unique. The bases given by equations~\eqref{pro:inv:I},~\eqref{pro:inv:O}, and~\eqref{pro:inv:EH} of Proposition \ref{pro:inv} correspond
to bases for these classes when considered as e-varieties in the sense
of Hall \cite{hal}. These are classes of regular semigroups that are
$HP$-closed, and also closed under the taking of regular subsemigroups.
Indeed any e-variety (of regular semigroups) is an EHP-class of regular
semigroups, and so e-varieties may be defined without the
need to introduce a unary operation that selects arbitrary inverses.
The only semigroup operation involved is the natural operation of
semigroup multiplication. However, not all EHP-classes consisting
of regular semigroups are e-varieties (see \cite[Theorem 5.1]{higjac}). In common
with e-varieties however is the property that if $\mathcal{C}$ is an
EHP-class of regular semigroups then the class $\mathcal{C}^{loc}$ of
all semigroups $S$ whose local subsemigroups $eSe$ lie in~$\mathcal{C}$
($e\in E(S))$ is also an EHP-class (see \cite[Theorem 5.5]{higjac}). 

The abstraction of the idea of e-varieties involves taking an EHP-class
of algebras $\mathcal{N}$, which are labelled \emph{nice}, and then
considering classes $\mathcal{C}$ of nice algebras that are $HP$-closed
and closed under the taking of nice subalgebras. Since the nice algebras
are defined by first order formulae (equation systems), it follows
that $\mathcal{C}$ will be closed under the taking of elementary subalgebras,
and so $\mathcal{C}$ will automatically be another EHP-class. There
could however, as in the case of regular semigroups, be EHP-classes
of nice algebras that were not closed under the taking of nice subalgebras.
If we declare the class of all algebras to be nice, then the corresponding
class of e-varieties coincides with varieties in the usual sense of
algebras defined by identities ($\exists$-free equation systems). 

\subsection{Classes defined by $(\exists\dots)(\forall\dots)$ }\label{subsec:EA}

A simple example of a fundamentally different type is the class $\mathcal{M}$
of \emph{monoids}. 
\begin{equation}
\mathcal{M}:(\exists x)(\forall a)\colon ax=xa=a.\label{eq:monoid}
\end{equation}
We sometimes write these equations as $x=1$, and similarly we write
$x=0$ to abbreviate the equations that ensure the existence of a
zero element in a semigroup. The point to note here however is that
the order of the existential quantifiers in~\eqref{eq:monoid} is $(\exists\,\dots)(\,\forall\,\dots)$,
which is the reverse of all our previous examples. Indeed $\mathcal{M}$
cannot be represented by equations of the type $(\forall\dots)(\exists\dots)$
because any class that has such a basis is closed under the taking
of the union of an ascending chain of algebras from the class, and
the class $\mathcal{M}$ lacks this property. We will investigate this
facet of the theory further in our final section.  

A natural exercise then is to exchange the order of the quantifiers
of the examples of Section \ref{subsec:reg}. This will necessarily result in
a more restricted class to that defined by the original equation system.
Exchanging the order of the quantifiers in equation~\eqref{pro:reg1} defines the class $\mathcal{B}$ of all semigroups that possess a
universal pre-inverse element: 
\begin{equation}
\mathcal{B}:(\exists\,x)(\forall\,a)\colon  a=axa.\label{eq:regular}
\end{equation}
We note that for any $S \in \mathcal{B}$, the $\mathscr{J}$-class $J_x$ is maximal.
Conversely any band $B$ with a maximal $\mathscr{J}$-class $J = J_x$ belongs to $\mathcal{B}$.  
To see this observe that since $B$  is a semilattice of rectangular bands, it follows that for any $a \in \mathcal{B}$, 
$ax \in D_a$. Since $D_a$ is a rectangular band, we have $ax = a^2x= a\cdot ax \mathscr{R} a$, whence $ax\cdot a = a$, showing that $B \in \mathcal{B}$. 

The following is a simple reformulation of the condition defined by \eqref{eq:regular}. 

\begin{lem}\label{lem:2.2.1}
A semigroup $S\in\mathcal{B}$ if and only if $\exists x\in S$
such that $ax\in R_{a}\cap E(S)$ for all $a\in S$, which in turn
is equivalent to the condition that $xa\in L_{a}\cap E(S)$ for all
$a\in S$. 
\end{lem}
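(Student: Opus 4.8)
The plan is to prove both equivalences by unpacking what the equation $a = axa$ forces about the product $ax$ (and dually $xa$) in terms of Green's relations and idempotency. The key algebraic observation is that the single equation $a = axa$ controls two things at once: it makes $ax$ a right identity for $a$ (so that $ax \,\mathscr{R}\, a$ becomes plausible) and it forces $ax$ to be idempotent.

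\begin{proof}[Proof sketch]
First I would establish the forward direction of the first equivalence. Suppose $S \in \mathcal{B}$, witnessed by $x \in S$ with $a = axa$ for all $a$. Fixing an arbitrary $a$, I would set $e = ax$ and check directly that $e$ is idempotent: since $a = axa$, we have $e^2 = ax\cdot ax = (axa)x = ax = e$. For the Green's relation, observe that $a = axa = (ax)a = ea$, so $a \in S^1 e$, giving $a \leq_{\mathscr{R}} ax = e$; conversely $ax \in aS^1$ gives $ax \leq_{\mathscr{R}} a$ trivially, so $ax \,\mathscr{R}\, a$. Thus $ax \in R_a \cap E(S)$ for all $a$, as required.

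For the converse of the first equivalence, suppose there is an $x$ with $ax \in R_a \cap E(S)$ for all $a$. Here the main point is to recover the equation $a = axa$ from the relation $ax \,\mathscr{R}\, a$ together with idempotency of $ax$. Since $ax \,\mathscr{R}\, a$, the element $ax$ lies in the same $\mathscr{R}$-class as $a$, and because $ax$ is idempotent it is a right identity within its own $\mathscr{R}$-class; the standard fact that an idempotent $e$ acts as a right identity on $R_e$ (i.e.\ $be = b$ whenever $b \,\mathscr{R}\, e$ and $e^2 = e$) then gives $a(ax) = a$, that is $axa \cdot$ — more precisely $a \cdot ax = a$ is not yet what we want, so the careful step is to note $ax \,\mathscr{R}\, a$ means $a$ and $ax$ generate the same right ideal, and applying the idempotent $ax$ on the right of $a$ fixes $a$. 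This yields $a = a(ax) = (ax)a$ after using that $ax$ is a two-sided-style identity on its $\mathscr{R}$-class; reassembling gives $a = axa$, so $S \in \mathcal{B}$.

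The second equivalence, that $S \in \mathcal{B}$ iff there exists $x$ with $xa \in L_a \cap E(S)$ for all $a$, is the exact left--right dual of the first and follows by the symmetric argument, interchanging the roles of $\mathscr{R}$ and $\mathscr{L}$ and of left and right multiplication; crucially the \emph{same} witness $x$ works, since $a = axa$ is left--right symmetric in the sense that it simultaneously yields $xa \in L_a \cap E(S)$ by the idempotency computation $(xa)^2 = x(axa) = xa$ and the relation $a = (xa)a \cdot$ giving $a \,\mathscr{L}\, xa$. I expect the main obstacle to be the converse directions: one must be careful that the hypothesis $ax \in R_a \cap E(S)$ (a condition on each $a$ with a fixed $x$) genuinely forces the algebraic identity $a = axa$, rather than merely $a = a(ax)$ for instance; the clean way through is to invoke the elementary lemma that an idempotent acts as an identity on its own Green class on the appropriate side, which closes the gap.
\end{proof}
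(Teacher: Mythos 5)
Your forward direction is essentially the intended argument (the paper in fact states this lemma without proof, as ``a simple reformulation''): from $a=axa$ one gets $(ax)^2=(axa)x=ax$, and $a=(ax)a$ together with $ax\in aS^1$ gives $ax\mathrel{\mathscr{R}}a$. One slip there: having written $a=ea$ with $e=ax$, the correct conclusion is $a\in eS^{1}$, not ``$a\in S^{1}e$'' --- membership in $S^{1}e$ is the $\mathscr{L}$-order, not the $\mathscr{R}$-order. The conclusion you draw is right, but the misstatement foreshadows the real problem.

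The converse rests on a false form of the key fact. You invoke ``the standard fact that an idempotent $e$ acts as a \emph{right} identity on $R_e$ (i.e.\ $be=b$ whenever $b\mathrel{\mathscr{R}}e$).'' This is backwards: an idempotent is a \emph{left} identity on its $\mathscr{R}$-class (if $b\mathrel{\mathscr{R}}e$ then $b=es$ for some $s\in S^1$, so $eb=e^{2}s=es=b$) and a \emph{right} identity on its $\mathscr{L}$-class. Your version fails already in a right-zero semigroup, where all elements are idempotent and mutually $\mathscr{R}$-related yet $be=e\neq b$. Consequently your intermediate claims are false, not merely awkward: in a rectangular band $L\times R$ with $x=(l_0,r_0)$ and $a=(l,r)$, the hypothesis $ax\in R_a\cap E(S)$ holds, yet $a(ax)=(l,r_0)\neq a$ whenever $r\neq r_0$; so $a\cdot ax=a$ is not available, and $ax$ is not a ``two-sided-style identity'' on $R_a$, which is the unsupported step you use to pass from $a(ax)$ to $(ax)a$. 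The repair is immediate and makes the converse a one-liner: since $ax\in E(S)$ and $a\mathrel{\mathscr{R}}ax$, the correct (left-identity) fact gives $(ax)a=a$ directly, i.e.\ $a=axa$, with no detour through $a(ax)$ at all. The same transposition recurs in your dual statement: $a\leq_{\mathscr{L}}xa$ should come from $a=a(xa)$ (i.e.\ $a\in S^{1}(xa)$), not from $a=(xa)a$, and in the converse one uses that the idempotent $xa$ is a right identity on its $\mathscr{L}$-class to get $a=a(xa)=axa$. With the sides straightened out, your overall plan --- same witness $x$, idempotency computations, plus the identity-on-Green-class fact --- is exactly the proof the paper has in mind.
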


\begin{pro}\label{pro:2.2.2}
Suppose that $S\in\mathcal{B}$, let $E$ denote
$E(S)$, and let $x$ denote a fixed choice for satisfying~\eqref{eq:regular}. Then
\begin{enumerate}
\item[(i)] $x\in E$, $S$ satisfies the identity $a^{2}=a^{3}$, and $S=E^{2}$.

\item[(ii)] In $S$, $\mathscr{D}=\mathscr{J}$, and $\mathscr{H}$ is the equality
relation.

\item[(iii)] Let $J=J_{x}$. Then $J$ is the maximum $\mathscr{J}$-class of $S$,  the principal factor $J\cup\{0\}=S/(S-J)\in\mathcal{B}$,
and 
\begin{equation}
R_{x}\cup L_{x}\subseteq E.\label{eq:13}
\end{equation}
\end{enumerate}
\end{pro}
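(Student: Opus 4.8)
The plan is to work with the fixed element $x$ from~\eqref{eq:regular}, which by Lemma~\ref{lem:2.2.1} satisfies $ax\in R_a\cap E(S)$ and $xa\in L_a\cap E(S)$ for all $a\in S$. These two facts will be the workhorses throughout, so I would establish and use them repeatedly rather than re-deriving from~\eqref{eq:regular} each time. For part~(i): to see $x\in E$, simply apply $ax\in E(S)$ with $a=x$, giving $x^2=xx\in E(S)$ and, since $x\cdot x\in R_x$ and $xx\in L_x$ forces $x^2\mathrel{\mathscr{H}}x$; one then argues the idempotent $x^2$ must equal $x$. For the identity $a^2=a^3$, the idea is that $ax$ and $xa$ are idempotents sitting in $R_a$ and $L_a$ respectively; writing $a=a\cdot xa=ax\cdot a$ and manipulating $a^2 = a(xa)a = \dots$ should collapse $a^3$ to $a^2$ using idempotency of $ax$ (so $axax=ax$) together with $a=axa$. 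For $S=E^2$, I would write any $a$ as $a=axa=(ax)(xa)$ — wait, more cleanly, $a=axa$ and both $ax,xa\in E$, but $a=(ax)(a)$ needs a product of two idempotents; using $a=ax\cdot a = ax\cdot xa$ (valid since $a=axa=ax\cdot a = ax(xa)$ once we know $a=xa\cdot$ something), so $a$ is a product of the two idempotents $ax$ and $xa$, giving $a\in E^2$.

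For part~(ii), the relation $\mathscr{H}=\Delta$ (equality) should follow from the identity $a^2=a^3$ together with the $\mathscr{B}$-structure: in a semigroup satisfying $a^2=a^3$ every $\mathscr{H}$-class is a singleton because there are no nontrivial subgroups (an $\mathscr{H}$-class that is a group would force $a^2=a^3\Rightarrow a=a^2$, i.e. idempotent, but more directly each $\mathscr{H}$-class of a group-bound semigroup with this identity is trivial). I would argue that if $a\mathrel{\mathscr{H}}b$ then using the idempotents $ax\in R_a=R_b$ and $xa\in L_a=L_b$ one pins down $a=b$. The equality $\mathscr{D}=\mathscr{J}$ is automatic once we know $S$ satisfies $a^2=a^3$: such semigroups are group-bound (every element has a power lying in a subgroup, here a power that is idempotent), and in any group-bound — indeed any periodic or even any eventually regular — semigroup $\mathscr{D}=\mathscr{J}$ holds by a standard result, which I would cite from \cite{hig} or \cite{how}.

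For part~(iii), that $J=J_x$ is the maximum $\mathscr{J}$-class: for any $a$, the factorisation $a=axa$ with $ax,xa$ involving $x$ shows $a\leq_{\mathscr{J}}x$, since $a$ lies in the ideal generated by $x$; hence every $J$-class lies below $J_x$. The principal factor claim requires showing $J\cup\{0\}=S/(S-J)\in\mathcal{B}$ — here the witness is the image $\bar x$ of $x$, and one verifies $\bar a=\bar a\,\bar x\,\bar a$ in the Rees quotient for $\bar a\neq 0$, which holds because $a=axa$ pushes through the quotient map (the only issue is products landing in $S-J$ becoming $0$, which does not affect the relation $a=axa$ since all three factors and the products $ax,xa$ remain in $J_x$ by maximality of $J_x$ and the fact that $R_x,L_x\subseteq J_x$). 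Finally, the inclusion~\eqref{eq:13} that $R_x\cup L_x\subseteq E$: for $b\in R_x$ we have $b\mathrel{\mathscr{R}}x$, and the aim is to show $b$ is idempotent. Since $bx\in R_b=R_x$ and $bx\in E$, and $x\in E$, one uses Green's Lemma within the $\mathscr{R}$-class to transfer idempotency; the dual handles $L_x$.

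The main obstacle I anticipate is part~(iii), specifically the clean verification that the principal factor $J\cup\{0\}$ again lies in $\mathcal{B}$ and the idempotency inclusion~\eqref{eq:13}. The delicate point is controlling where products land relative to $J_x$: one must confirm that the relevant products (such as $ax$, $xa$, and products of elements of $R_x$ or $L_x$) do not drop below $J_x$ in the $\mathscr{J}$-order, so that the witness $x$ behaves correctly in the Rees quotient and so that the $\mathscr{R}$- and $\mathscr{L}$-class transfer arguments for~\eqref{eq:13} stay inside the top $\mathscr{J}$-class. Parts~(i) and~(ii) I expect to be routine given Lemma~\ref{lem:2.2.1} and the standard $\mathscr{D}=\mathscr{J}$ result for group-bound semigroups.
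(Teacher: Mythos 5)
The decisive gap is in part (i), and it propagates through everything else. Both of your arguments there use only consequences of \eqref{eq:regular} for the single element $a$ (resp.\ $x$): the relations $a=axa$, $(ax)^2=ax$, $(xa)^2=xa$, and $x^2=x$. These cannot yield $a^2=a^3$. Concretely, in the bicyclic monoid $\langle p,q\mid pq=1\rangle$ take $a=p$ and $x=q$: then $axa=a$ and both $ax=1$ and $xa=qp$ are idempotent, yet $p^2\neq p^3$; and even if one adds $x^2=x$, the semigroup $\langle a,x\mid x^2=x,\ axa=a\rangle$ satisfies every relation you invoke while $a^2$ and $a^3$ are distinct normal forms for the complete, length-reducing rewriting system $\{xx\to x,\ axa\to a\}$, so no manipulation of those relations can identify them. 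The missing idea --- and the crux of the paper's one-line computation --- is to exploit that the \emph{same} $x$ serves every element of $S$, in particular $a^2$: applying \eqref{eq:regular} to the element $a^2$ gives $a^2=a^2xa^2=a(axa)a=a^3$. With that in hand, $x\in E$ follows at once from $x=x^3$ (take $a=x$ in \eqref{eq:regular}) combined with $x^2=x^3$. Your own route to $x\in E$ is additionally circular: knowing $x^2\in E$ and $x^2\mathrel{\mathscr{H}}x$ only makes $H_x$ a group with identity $x^2$, and to force $x=x^2$ you need trivial subgroups, i.e.\ precisely the identity $a^2=a^3$, which in your ordering is proved afterwards (and by the argument that cannot close).

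Two further mechanisms you appeal to do not work as stated, though the conclusions are easily repaired. For \eqref{eq:13}, ``Green's Lemma to transfer idempotency'' is not a valid step: the translations in Green's Lemma are bijections between $\mathscr{H}$-classes and do not preserve idempotents. The correct argument (the paper's) is elementary: if $b\mathrel{\mathscr{R}}x$ then, $x$ being idempotent, $x$ is a left identity for its $\mathscr{R}$-class, so $xb=b$, whence $b=bxb=b\cdot b=b^2$; dually for $L_x$. Similarly, in (ii) the inference from ``no nontrivial subgroups'' to ``$\mathscr{H}$ is trivial'' is not automatic in an arbitrary semigroup; it needs the regularity of $S$ (every $\mathscr{D}$-class then contains a group $\mathscr{H}$-class, and all $\mathscr{H}$-classes within a $\mathscr{D}$-class are equipotent), which holds here because $a=axa$, and your alternative sketch of ``pinning down $a=b$'' from the idempotents $ax$, $xa$ is left incomplete. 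Finally, for the principal factor in (iii), your direct verification in the Rees quotient can be made to work, but the paper's route is shorter and avoids tracking where products land: $\mathcal{B}$ is an EHP-class, hence closed under homomorphic images, and $S/(S-J)$ is a homomorphic image of $S$.
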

\begin{proof}
(i) Taking $a=x$ in~\eqref{eq:regular} we get $x=x^{3}$. For any
$a$ we have $a^{2}=a^{2}xa^{2}=a(axa)a=a^{3}$. In particular, $x^{2}=x^{3}=x$,
so that $x\in E$. Furthermore, since $(ax)^{2}=ax$ and $(xa)^{2}=xa$
we have $a=axa=(ax)(xa)$, and so $S=E^{2}$. 

(ii) That $\mathscr{D}=\mathscr{J}$ follows from the satisfaction of $a^{2}=a^{3}$,
as this equality of Green's relations is true in any periodic semigroup;
indeed it is true of any group-bound semigroup (see \cite[Theorem
1.2.20]{clipreII}). 

Let $D$ be any (regular) $\mathscr{D}$-class of $S$. In any subgroup
$G$ of $S$, the equation $a^{2}=a^{3}$ implies that $a=e$, the
identity element of $G$, and so $S$ has trivial subgroups. It follows
that every group $\mathscr{H}$-class, and hence every $\mathscr{H}$-class
of $S$ is trivial, which is to say that $S$ is a combinatorial (i.e.
$\mathscr{H}$-trivial) semigroup. 

(iii) Since ${J}_{a}\leq {J}_{x}$ it follows that $J=J_{x}$
is the maximum $\mathscr{J}$-class of $S$ (where $x$ represents any
solution to~\eqref{eq:regular}). Since EHP-classes are closed under the taking of
homomorphisms, the principal factor $J\cup\{0\}$ also belongs to
$\mathcal{B}$, and any solution to~\eqref{eq:regular} in $S$ is also a solution to~\eqref{eq:regular} 
in $J\cup\{0\}$. Suppose that $x\mathrel{\mathscr{R}}a$ in $S$, and hence
in $S/(S-J)$ also. Then since $x\in E$, it follows that $a=xa$.
But then $a=axa=a^{2}$, so that $a\in E$. Dually if $x\mathrel{\mathscr{L}}a$
then $a$ is also idempotent. We conclude that Condition~\eqref{eq:13} holds.
\end{proof}

\begin{remark}\label{rem:2.2.3} It is possible for a semigroup $S$ to satisfy
all three conditions of Proposition \ref{pro:2.2.2} yet for $S$ not to belong to
$\mathcal{B}$. For example take the six-element semigroup $A_{2}^{1}$,
which is the Rees matrix $0$-simple semigroup with adjoined identity
element 1, given by $\mathcal{M}^{0}[\{e\},2,2;P]^{1}$, where $\{e\}$
is a one-element group and 
\[
P=\begin{bmatrix}e & e\\
e & 0
\end{bmatrix}.
\]
Taking $x=1$, we see that each of (i), (ii), and (iii) is satisfied.
However $A_{2}^{1}\not\in\mathcal{B}$: taking $a=1$ we see that we
must take $x=1$ in order to satisfy the condition of Proposition \ref{pro:2.2.2}.
However with $x=1$, for the single non-idempotent element $a=(e;2,2)$
of $A_{2}^{1}$, we have $ax=xa=a\not\in E(S)$, contrary to Lemma \ref{lem:2.2.1}. 
\end{remark}

We next consider the more restricted equational system $\mathcal{V}\subseteq\mathcal{B}$, consisting of all semigroups that possess a universal inverse
element: 
\begin{equation}
\mathcal{V}:(\exists\,x)(\forall\,a)\colon x\in V(a).\label{eq:bisimple}
\end{equation}

Let $S \in \mathcal{V}$, and so Proposition \ref{pro:2.2.2} applies. In particular $S$
is a regular periodic combinatorial semigroup. Moreover, \eqref{eq:bisimple} implies that
$S$ is bisimple, and since any periodic bisimple semigroup is completely simple (Corollary 2.56 of [3]), we conclude that $S$ is completely simple.  
However, a completely simple combinatorial semigroup is none other than a rectangular band, which certainly satisfies \eqref{eq:bisimple}. Indeed this gives
the following curious formulation of the property of the existence of a universal inverse.

\begin{pro} \label{pro:2.2.4}
For any semigroup $S$ either 

\centerline{$ \cap _{a \in S} V(a) = \varnothing$  or $\cap_{a \in S}  V(a) = S$,} 

\noindent the latter occurring if and only if $S$ is a rectangular band. 
\end{pro}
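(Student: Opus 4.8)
The plan is to prove the dichotomy and the characterization together by analyzing the intersection $I = \cap_{a \in S} V(a)$, the set of universal inverses. First I would observe that $I \neq \varnothing$ is precisely the condition that $S \in \mathcal{V}$ as defined by \eqref{eq:bisimple}: an element $x$ lies in $I$ if and only if $x \in V(a)$ for every $a \in S$, which is exactly the witness demanded by the $(\exists x)(\forall a)$ quantification. So the entire content reduces to showing that \emph{whenever $I$ is nonempty it is in fact all of $S$, and this happens exactly for rectangular bands}.

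For the forward direction, suppose $I \neq \varnothing$, so $S \in \mathcal{V}$. By the discussion preceding the proposition (which I may assume), $S$ is then a combinatorial completely simple semigroup, hence a rectangular band. The key structural fact about a rectangular band is that it satisfies the identity $aba = a$; in particular every element is idempotent and $axa = a$, $xax = x$ hold for \emph{all} pairs $a,x$. This means every element of $S$ is an inverse of every other element, so $V(a) = S$ for each $a$, giving $I = \cap_{a} V(a) = S$. Thus $I \neq \varnothing$ forces $I = S$, which simultaneously establishes that the middle alternative ($I$ strictly between) never occurs and that membership in $\mathcal{V}$ implies $S$ is a rectangular band.

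For the converse I would check directly that a rectangular band $S$ satisfies $I = S$: using $aba = a$ and the fact that all elements are idempotent, for any fixed $x \in S$ and any $a \in S$ we have $axa = a$ and $xax = x$, so $x \in V(a)$; since $a$ was arbitrary, $x \in I$, and since $x$ was arbitrary, $I = S$. This closes the loop: $S$ is a rectangular band if and only if $I = S$, and the dichotomy $I = \varnothing$ or $I = S$ holds for every semigroup.

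I do not anticipate a serious obstacle here, since the heavy lifting is already carried out in the paragraph preceding the proposition, where bisimplicity, periodicity, and combinatoriality are combined to pin $S$ down as completely simple and then as a rectangular band. The one point requiring a little care is making explicit the elementary but crucial observation that in a rectangular band the inverse relation is total --- that $V(a) = S$ for every $a$ --- since it is this uniformity, rather than merely the existence of a single universal inverse, that collapses the intersection all the way up to $S$ and rules out any intermediate possibility.
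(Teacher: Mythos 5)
Your proof is correct and takes essentially the same route as the paper, whose own argument is the discussion immediately preceding the proposition: a nonempty intersection means $S\in\mathcal{V}$, whence Proposition~\ref{pro:2.2.2} together with bisimplicity and periodicity forces $S$ to be a combinatorial completely simple semigroup, i.e.\ a rectangular band, and conversely rectangular bands satisfy~\eqref{eq:bisimple}. Your explicit observation that in a rectangular band $V(a)=S$ for every $a$ (so the intersection is all of $S$, never something intermediate) is precisely the step the paper leaves implicit, and it is correctly justified by the identity $aba=a$ plus idempotency.
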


\subsection{Applying the EHP theorem}

The previous section also serves to introduce a strategy for applying
the EHP theorem. The general approach is to systematically list semigroup
equations and identify the corresponding semigroup classes.  As an
example that leads to a new result, we go by way of the following EHP class.

\begin{pro}\label{pro:2.3.1}
The following are equivalent for a semigroup
$S$: 
\begin{enumerate}
\item[(i)] $S$ satisfies 
\begin{equation}
(\forall a)(\exists x)\colon a=a^{2}x.\label{eq:17}
\end{equation}

\item[(ii)] $S$ satisfies $a\mathrel{\mathscr{R}}a^{2}$ for all $a\in S$. 

\item[(iii)] Each $\mathscr{R}$-class of $S$ is a subsemigroup
of $S$. 
\end{enumerate}
\end{pro}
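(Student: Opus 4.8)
The plan is to prove the cyclic chain of implications $(i)\Rightarrow(ii)\Rightarrow(iii)\Rightarrow(i)$, since each link is short and the statements are simply reformulations of one another in increasingly structural language. Throughout I will use only elementary properties of Green's $\mathscr{R}$-relation, namely that $a\mathrel{\mathscr{R}}b$ iff $aS^1=bS^1$, and that $a\mathrel{\mathscr{R}}b$ with $b=as$ for some $s\in S^1$ is detected by writing $a$ as a right multiple of $b$.

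For $(i)\Rightarrow(ii)$: assume \eqref{eq:17} holds, so for each $a$ there is $x$ with $a=a^2x$. Since $a^2=a\cdot a\in aS^1$ we always have $a^2\leq_{\mathscr{R}}a$; the equation $a=a^2x\in a^2S^1$ gives the reverse inequality $a\leq_{\mathscr{R}}a^2$, so $a\mathrel{\mathscr{R}}a^2$. For $(ii)\Rightarrow(iii)$: fix an $\mathscr{R}$-class $R$ and take $a,b\in R$; I must show $ab\in R$, i.e.\ $ab\mathrel{\mathscr{R}}a$. Because $b\mathrel{\mathscr{R}}a$ I can write $a=bt$ for some $t\in S^1$, and $b\mathrel{\mathscr{R}}b^2$ (from (ii)) lets me write $b=b^2s$ for some $s\in S^1$. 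The goal is to recover $a$ as a right multiple of $ab$; the natural move is to compute $ab\cdot(\text{something})=a$ using these relations together with the fact that right multiplication $\rho_b$ maps $R_a$ into itself bijectively when $ab\mathrel{\mathscr{R}}a$ (Green's Lemma). I expect the cleanest route is: from $b\mathrel{\mathscr{R}}b^2$ and $a\mathrel{\mathscr{R}}b$ one shows $ab\mathrel{\mathscr{R}}ab^2=(ab)b$, and then translating via the representation $a=bt$ yields $ab\mathrel{\mathscr{R}}a$.

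For $(iii)\Rightarrow(i)$: if every $\mathscr{R}$-class is a subsemigroup, then in particular $a,a\in R_a$ forces $a^2\in R_a$, so $a\mathrel{\mathscr{R}}a^2$; writing $a=a^2x$ for the appropriate $x\in S^1$ recovers \eqref{eq:17}, where one must check the $x=1$ case does not actually arise or is harmless (if $a=a^2\cdot 1=a^2$ then $a$ is idempotent and any $x$ with $ax=a$ works, e.g.\ one uses $a\mathrel{\mathscr{R}}a^2$ directly to extract a genuine element $x\in S$).

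The main obstacle is the implication $(ii)\Rightarrow(iii)$, which is the only step with genuine content: from the pointwise condition $a\mathrel{\mathscr{R}}a^2$ one must deduce the global closure of each $\mathscr{R}$-class under multiplication. The subtlety is that knowing $a\mathrel{\mathscr{R}}a^2$ for every element does not immediately give $ab\mathrel{\mathscr{R}}a$ for $a\mathrel{\mathscr{R}}b$, and I expect the argument to hinge on a careful application of Green's Lemma (translating the relation $b\mathrel{\mathscr{R}}b^2$ along the $\mathscr{R}$-class via the bijection $\rho_b$) rather than on a one-line manipulation. A secondary, purely bookkeeping concern is the consistent handling of $S^1$ versus $S$ so that the witness $x$ in \eqref{eq:17} is a genuine element of $S$ rather than an adjoined identity.
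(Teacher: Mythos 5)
Your steps (i)$\Rightarrow$(ii) and (iii)$\Rightarrow$(i) are essentially fine, though in the latter the clean fix for the $S^1$ issue is simply this: if the only witness is $1$, then $a=a^2$, and $x=a\in S$ works because $a^2\cdot a=a\cdot a=a$. Your alternative suggestion, to ``use $a\mathrel{\mathscr{R}}a^2$ directly to extract a genuine element $x\in S$,'' is circular, since $\mathscr{R}$ is itself defined via $S^1$ and may hand you back the adjoined identity.

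The genuine gap is (ii)$\Rightarrow$(iii), which you correctly single out as the step with content but then do not actually prove. You write ``one shows $ab\mathrel{\mathscr{R}}(ab)b$, and then translating via the representation $a=bt$ yields $ab\mathrel{\mathscr{R}}a$,'' but neither the translation step nor the route to the conclusion is carried out, and your appeal to Green's Lemma is circular: any bijection $\rho_b$ associated to the pair $(a,ab)$ is available only once you already know $ab\mathrel{\mathscr{R}}a$, which is exactly what is to be proved. The missing idea is elementary and makes the step the one-line manipulation you expected not to exist: $\mathscr{R}$ is a \emph{left congruence}, i.e.\ $u\mathrel{\mathscr{R}}v$ implies $cu\mathrel{\mathscr{R}}cv$ for all $c\in S$ (if $u=vs$ and $v=us'$ with $s,s'\in S^1$, then $cu=cvs$ and $cv=cus'$). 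Applying this to $a\mathrel{\mathscr{R}}b$ with $c=a$ gives $a^2\mathrel{\mathscr{R}}ab$, and combining with hypothesis (ii) gives $a\mathrel{\mathscr{R}}a^2\mathrel{\mathscr{R}}ab$, so $ab\in R_a$. This is precisely the paper's argument. Your intermediate claim $ab\mathrel{\mathscr{R}}ab^2$ is also an instance of left congruence (applied to $b\mathrel{\mathscr{R}}b^2$), but it is the wrong instance: applied instead to $a\mathrel{\mathscr{R}}b$ it finishes the proof immediately, with no use of $a=bt$ or of Green's Lemma at all.
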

\begin{proof}
(i) $\Rightarrow$ (ii). It follows immediately from
\eqref{eq:17} that $a\mathrel{\mathscr{R}}a^{2}$. 

(ii) $\Rightarrow$(i). Since $a\mathrel{\mathscr{R}}a^{2}$, there exists $y\in S^1$
such that $a=a^{2}y$.  If $y \in S$ we may take $x = y$ in order satisfy \eqref{eq:17}, otherwise put $x = a$.

(ii) $\Leftrightarrow\,$(iii). Clearly $(\text{iii)\ensuremath{\Rightarrow\text{(ii)}}}$.
For the converse direction take $(a,b)\in\mathscr{R}$ in~$S$. Then
since $a\mathrel{\mathscr{R}}a^{2}$ and $\mathscr{R}$ is a left congruence we get
$a\mathrel{\mathscr{R}}a^{2}\mathrel{\mathscr{R}}ab$, and so $R_{a}$ is a subsemigroup of~$S$. 
\end{proof} 

\begin{cor}\label{cor:2.8}
 The following are equivalent for a semigroup $S$: 
\begin{enumerate}
\item[(i)] $S$ satisfies 
\begin{equation}
(\forall a)(\exists x)\colon a=a^{2}xa.\label{eq:19IV}
\end{equation}

\item[(ii)] $S$ satisfies $a\mathrel{\mathscr{R}}a^{2}$ for all $a\in S$ and each $\mathscr{R}$-class
contains an idempotent.

\item[(iii)] $S$ is regular and each $\mathscr{R}$-class of $S$ is a subsemigroup
of $S$. 
\end{enumerate}
\end{cor}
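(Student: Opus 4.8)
The plan is to prove Corollary~\ref{cor:2.8} by leveraging Proposition~\ref{pro:2.3.1}, since equation~\eqref{eq:19IV} is simply~\eqref{eq:17} with an extra factor of $a$ appended on the right; this added factor is precisely what should encode regularity. I would establish the three equivalences as a cycle, roughly (i)$\Rightarrow$(ii)$\Rightarrow$(iii)$\Rightarrow$(i), but in practice it is cleaner to route everything through the already-proved condition $a\mathrel{\mathscr{R}}a^2$ of Proposition~\ref{pro:2.3.1}.

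First I would show (i)$\Rightarrow$(ii). From $a=a^2xa$ we read off directly that $a\leq_{\mathscr{R}}a^2\leq_{\mathscr{R}}a$, so $a\mathrel{\mathscr{R}}a^2$ for all $a$; thus by Proposition~\ref{pro:2.3.1} each $\mathscr{R}$-class is a subsemigroup. For the idempotent, note that $e=a^2x$ satisfies $ea=a$, and using $a\mathrel{\mathscr{R}}a^2$ together with $a=ea$ one checks $e\mathrel{\mathscr{R}}a$ and that $e$ is idempotent (writing $e=a^2x$ and computing $e^2 = a^2x\cdot a^2 x$, where the relation $a\mathrel{\mathscr{R}}a^2$ lets one replace $a^2 x\cdot a$ by $a$ inside); hence each $\mathscr{R}$-class contains an idempotent. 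The converse (ii)$\Rightarrow$(i) is the step where I would spend the most care: given $a\mathrel{\mathscr{R}}a^2$ and an idempotent $e\in R_a$, I want to produce $x$ with $a=a^2xa$. Since $e\mathrel{\mathscr{R}}a$ there is $t$ with $a=et$, and since $e$ is a left identity on $R_a$ we get $ea=a$; combining $a\mathrel{\mathscr{R}}a^2$ (say $e=a^2 y$ for suitable $y$, as $e\in R_{a^2}$) with $ea=a$ yields $a=ea=a^2ya$, so $x=y$ works.

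For the regularity equivalence, (ii)$\Leftrightarrow$(iii), the key observation is that a semigroup is regular if and only if every $\mathscr{R}$-class (equivalently every $\mathscr{L}$-class) contains an idempotent, a standard fact from the structure theory of regular $\mathscr{D}$-classes. Thus ``each $\mathscr{R}$-class contains an idempotent'' is exactly regularity, and ``$a\mathrel{\mathscr{R}}a^2$ for all $a$ with $\mathscr{R}$-classes being subsemigroups'' matches~(iii) verbatim once Proposition~\ref{pro:2.3.1} is invoked to identify the condition $a\mathrel{\mathscr{R}}a^2$ with the subsemigroup property of $\mathscr{R}$-classes. I would therefore phrase (ii)$\Leftrightarrow$(iii) as: given $a\mathrel{\mathscr{R}}a^2$, the extra clause ``each $\mathscr{R}$-class contains an idempotent'' is equivalent to regularity of $S$, so the two formulations coincide.

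The main obstacle I anticipate is the bookkeeping in (ii)$\Rightarrow$(i): one must pin down the right idempotent and exploit that it acts as a left identity on its $\mathscr{R}$-class, and confirm that the element witnessing $a\mathrel{\mathscr{R}}a^2$ can be chosen inside $S$ (not merely $S^1$), exactly as handled in the proof of Proposition~\ref{pro:2.3.1}(ii)$\Rightarrow$(i). Once the interplay between the left-identity property of idempotents and the relation $a\mathrel{\mathscr{R}}a^2$ is set up correctly, the computation showing $a=a^2xa$ should be short, and the remaining implications are immediate consequences of Proposition~\ref{pro:2.3.1} and the characterization of regularity via idempotents in $\mathscr{R}$-classes.
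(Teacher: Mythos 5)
Your proposal is correct and takes essentially the same approach as the paper's proof: both route through Proposition~\ref{pro:2.3.1}, both use the correspondence between regularity and idempotents in $\mathscr{R}$-classes, and both establish (ii)$\Rightarrow$(i) via the left-identity property $ea=a$ of an idempotent $e\in R_{a}=R_{a^{2}}$, expressing things in terms of $a^{2}$ and a witness from $S^{1}$. One small caveat: your deferral of the $S^{1}$-to-$S$ repair as ``exactly as handled'' in Proposition~\ref{pro:2.3.1} is not quite verbatim --- in the degenerate case $y=1$ (so $e=a^{2}$ and $a^{3}=a$) the fallback $x=a$ of that proposition gives $a^{2}xa=a^{4}=a^{2}$, which need not equal $a$, whereas $x=a^{2}$ does work since then $a^{2}xa=a^{5}=a$.
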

\begin{proof}
 Since equation \eqref{eq:19IV} clearly implies both regularity and equation \eqref{eq:17}, it follows from Proposition 2.7 that (i) implies (iii), while (iii) certainly implies (ii). 
Finally, given (ii),  for any $a \in S$  there exists there exists $y \in S^1$ such that $a = a^2y$.  Moreover, there exists an idempotent $e \in R_a$, whence there exists $z \in S$  such that $az = a^2yz = e$. Then since $ea = a$ we have that $x = yz \in S$ satisfies $a = a^2xa$.   
\end{proof}

\begin{defn}\label{def:2.3.2}
We (temporarily) define the EHP-classes
of \emph{right regular} and \emph{left regular} semigroups, $\mathcal{C}_{r}$
and $\mathcal{C}_{l}$ respectively by the equation systems:
\[
\mathcal{C}_{r}:(\forall a)(\exists x)\colon a=a^{2}xa,\,\,\mathcal{C}_{l}:\,(\forall a)(\exists x)\colon a=axa^{2}.
\]
\end{defn}
The proof of our next theorem was first generated through use of the
software package \emph{Prover~9}~\cite{mcc}.  It is the equational
nature of EHP theory, which is based on the first order language,
that allows computational access to theorems of this nature. 
Although
that proof was not long, some steps seemed unmotivated and so we present a modified
version, where the steps are more natural.  In common with
the Prover 9 argument, our proof  still has two layers of what we might
call nesting, as we choose elements that act on the right to return
squares to their roots. 
%Although
%that proof was not long, some steps seemed unmotivated, for indeed
%they are the result of random testing. The proof here is a modified
%version, where the steps are more natural, although in common with
%the Prover 9 argument, there are still two layers of what we might
%call nesting, as we choose elements that act on the right to return
%squares to their roots. 

\begin{thm} \label{thm:2.3.3}
$\mathcal{C}_{r}=\mathcal{C}_{l}=\mathcal{CR}$.
\end{thm}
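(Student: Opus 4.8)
The plan is to prove the two-way containment $\mathcal{C}_r = \mathcal{CR}$; the symmetric argument then gives $\mathcal{C}_l = \mathcal{CR}$, and the theorem follows. The easy direction is $\mathcal{CR} \subseteq \mathcal{C}_r$: if $S$ is completely regular, then for each $a$ the $\mathscr{H}$-class $H_a$ is a group with identity $e = aa^{-1} = a^{-1}a$, and I expect that taking $x = a^{-1}$ (the group inverse in $H_a$, or a suitable power thereof) yields $a^2 x a = a^2 a^{-1} a = a\cdot a \cdot a^{-1}\cdot a = a e a = a$. This mirrors the computation already done for \eqref{pro:reg4} in Proposition~\ref{pro:reg}, so it should be routine.

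The substantive direction is $\mathcal{C}_r \subseteq \mathcal{CR}$, that is, showing the one-sided equation $a = a^2 x a$ forces complete regularity. By Corollary~\ref{cor:2.8}, membership in $\mathcal{C}_r$ is equivalent to $S$ being regular with $a \mathrel{\mathscr{R}} a^2$ for all $a$ and each $\mathscr{R}$-class containing an idempotent; in particular $S$ is regular and satisfies $a \mathrel{\mathscr{R}} a^2$. To reach complete regularity I want to upgrade this to $a \mathrel{\mathscr{H}} a^2$, equivalently to establish the dual relation $a \mathrel{\mathscr{L}} a^2$ (since $a \mathrel{\mathscr{R}} a^2$ already holds, $a \mathrel{\mathscr{L}} a^2$ would give $a \mathrel{\mathscr{H}} a^2$, forcing every $\mathscr{H}$-class to be a group and hence $S \in \mathcal{CR}$, as in the argument for \eqref{pro:reg4}). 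The key step is thus to derive $a \mathrel{\mathscr{L}} a^2$ purely from the defining equation.

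The natural route is to exploit the nesting alluded to in the remark preceding the theorem: starting from $a = a^2 x a$, I would apply the same equation to the element $a^2$ (or to $xa$, $ax$) to produce a second-level substitution that lets me rewrite $a$ as a left multiple of $a^2$. Concretely, since $a \mathrel{\mathscr{R}} a^2$ we have $a^2 = a \cdot a$ with $a = a^2 x a = a \cdot(axa)$; setting $e = axa$ one checks $e$ is idempotent and a right identity for $a$, and then applying the equation to $xa$ or manipulating $a = a^2xa = a^2 x a^2 x a$ should expose an element witnessing $a \in S a^2$. I expect the decisive identity to be something of the form $a = (a^2xa^2)x a$ or $a = w a^2$ for an explicit word $w$ in $a$ and $x$, obtained by one further substitution of the defining equation into itself.

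The main obstacle will be precisely this second nesting: finding the right word to substitute so that the right-regularity equation, which is inherently asymmetric and only directly controls the $\mathscr{R}$-order, yields the dual $\mathscr{L}$-order information. Because the equation gives no a priori left-sided factorisation of $a$ through $a^2$, the proof must manufacture one, and getting the idempotents and the two layers of substitution to align is the delicate part; the fact that the original \emph{Prover~9} proof had ``unmotivated'' steps with two layers of nesting signals that this is where the genuine content lies. Once $a \mathrel{\mathscr{L}} a^2$ is in hand, combining with $a \mathrel{\mathscr{R}} a^2$ to conclude $S \in \mathcal{CR}$ is immediate, and the left-right dual disposes of $\mathcal{C}_l$.
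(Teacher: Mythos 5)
Your overall strategy is the same as the paper's: handle $\mathcal{CR}\subseteq\mathcal{C}_r$ routinely, then by symmetry and the left--right dual of Proposition~\ref{pro:2.3.1} (equivalently Corollary~\ref{cor:2.8}) reduce $\mathcal{C}_r\subseteq\mathcal{CR}$ to deriving $a\mathrel{\mathscr{L}}a^2$ from $a=a^2xa$. But the proposal stops at exactly the step that \emph{is} the theorem. You predict that ``one further substitution of the defining equation into itself'' will produce a word $w$ with $a=wa^2$, but you never exhibit $w$, never carry out a substitution, and explicitly defer this as ``the main obstacle'' and ``where the genuine content lies.'' Locating the difficulty is not resolving it: deriving $a=axa^2$ from $a=a^2xa$ (this is the needed $a=wa^2$, with $w=ax$) is precisely the nontrivial content, and nothing in the proposal makes progress on it. There is also a computational slip in the easy direction: with $e=aa^{-1}$ the identity of $H_a$, one has $a^2a^{-1}a=aea=a^2\neq a$ in general; the correct witness is $x=(a^{-1})^2$, or one can simply cite Corollary~\ref{cor:2.8}, since a completely regular semigroup is regular and its $\mathscr{R}$-classes are subsemigroups.

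For the record, here is the missing argument (the paper's proof). Fix $a'$ and $a^R$ with $a=aa'a$ and $a=a^2a^R$ (for instance $a'=ax$ and $a^R=xa$), and set $r=a^Ra'a$; with these canonical choices $r=xa^2xa=xa$, so one of your candidate elements ($xa$) is indeed the one to which the equation gets applied a second time --- but the substance lies in what follows. One verifies three identities: $a^2r=a$; $(ar)^2=ar$; and hence, since $a^2r^2=(a^2r)r=ar$, also $a^2r^2=ar\cdot a^2r^2$. Now apply the defining equation to $r$ itself to obtain $r^R$ with $r=r^2r^R$, and compute
\[
a=a^2r=a^2r^2r^R=ara^2r^2r^R=ara^2r=ara.
\]
Finally, $ra=a^Ra'a^2\in Sa^2$ and $a^2ra=a^2$ give $ra\mathrel{\mathscr{L}}a^2$, and then $a=ara$ yields $a\mathrel{\mathscr{L}}ra\mathrel{\mathscr{L}}a^2$, as required. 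Note that your other guessed identity, $a=(a^2xa^2)xa$, is true but useless here: it reads $a=a^2\cdot(xa^2xa)$, so it merely re-establishes $a\leq_{\mathscr{R}}a^2$ and provides no factorisation of $a$ through $a^2$ on the left.
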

\begin{proof}
It follows from Proposition~\ref{pro:2.3.1} and its left-right
dual that any completely regular semigroup is both right regular and
left regular. By symmetry, in order to prove the converse it is enough
to deal with the case where $S\in\mathcal{C}_{r}$, and by the left-right
dual of Proposition~\ref{pro:2.3.1}, it is enough to show that $a\mathrel{\mathscr{L}}a^{2}$
for an arbitrary $a\in S$. 

Choose and fix two members $a',a^{R}\in S$ such that $a=aa'a$ and
$a=a^{2}a^{R}$ and put 
\begin{equation}
r=a^{R}a'a.\label{eq:18}
\end{equation}
Then $r$ has three properties relevant to our purpose, 
\begin{equation}
a^{2}r=(a^{2}a^{R})a'a=aa'a=a.\label{eq:19}
\end{equation}
\begin{equation}
(ar)^{2}=(aa^{R}a'a)(aa^{R}a'a)=(aa^{R}a')(a^{2}a^{R})a'a=aa^{R}a'(aa'a)=a(a^{R}a'a)=ar.\label{eq:20}
\end{equation}
Moreover $a^{2}r=a$ implies $a^{2}r^{2}=ar$ and since \eqref{eq:20} shows
that $ar$ is idempotent we infer:
\begin{equation}
a^{2}r^{2}=ara^{2}r^{2}.\label{eq:21}
\end{equation}
We next deduce that $a=ara$ as follows:
\begin{align*}
a&=a^{2}r\,\,(\text{by \eqref{eq:19})}\\
&=a^{2}r^{2}r^{R}\,\,\text{(by definition of \ensuremath{r^{R}})}\\
&=ara^{2}r^{2}r^{R}\,\,\text{(by \eqref{eq:21})}\\
&=ara^{2}r\,\,\text{(by definition of \ensuremath{r^{R}})}\\
&=ara\,\,\text{(by \eqref{eq:19}).}
\end{align*}
Now $ra\mathrel{\mathscr{L}}a^{2}$ as $ra=a^{R}a'a^{2}$ (by \eqref{eq:18}), and from \eqref{eq:19},  $a^{2}ra=a^{2}$.
Since $a=ara$ we then have $a\mathrel{\mathscr{L}}ra\mathrel{\mathscr{L}}a^{2}$, which completes
the proof.
\end{proof}

Reformulating Theorem \ref{thm:2.3.3} in the terms of Corollary  \ref{cor:2.8}  yields the following result which, to our knowledge,
is not in the literature.  

\begin{cor}\label{cor:2.3.4}
The following are equivalent for a regular
semigroup $S$:
\begin{enumerate}
\item[(i)] each $\mathscr{R}$-class is a subsemigroup of $S$;

\item[(ii)] each $\mathscr{L}$-class is a subsemigroup of $S$;

\item[(iii)] $S$ is completely regular. 
\end{enumerate}
\end{cor}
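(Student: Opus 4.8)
The plan is to recognise that this corollary requires essentially no new argument of its own: it is a direct translation of Theorem~\ref{thm:2.3.3} through Corollary~\ref{cor:2.8} and its left--right dual, so the substantive work has already been carried out. For the equivalence of (i) and (iii) I would invoke Corollary~\ref{cor:2.8}. Since we are assuming at the outset that $S$ is regular, condition (i) of the present statement---that every $\mathscr{R}$-class is a subsemigroup---together with the standing regularity hypothesis is precisely condition (iii) of Corollary~\ref{cor:2.8}. By the equivalence established there, this holds if and only if $S$ satisfies the equation $a=a^{2}xa$ of~\eqref{eq:19IV}, which is exactly to say that $S\in\mathcal{C}_{r}$.

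Theorem~\ref{thm:2.3.3} then supplies $\mathcal{C}_{r}=\mathcal{CR}$, so $S\in\mathcal{C}_{r}$ if and only if $S$ is completely regular; combining this with the previous step yields (i) $\Leftrightarrow$ (iii). For (ii) $\Leftrightarrow$ (iii) I would apply the left--right dual of the same chain: the dual of Corollary~\ref{cor:2.8} shows that, for regular $S$, every $\mathscr{L}$-class being a subsemigroup is equivalent to $S\in\mathcal{C}_{l}$, and Theorem~\ref{thm:2.3.3} gives $\mathcal{C}_{l}=\mathcal{CR}$. Because complete regularity is a left--right symmetric property, both chains terminate at the same condition (iii), closing the cycle of equivalences.

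The only point demanding care---and it is a bookkeeping matter rather than a genuine obstacle---is to align the conditions correctly across the different statements: condition (i) here is phrased under the blanket assumption that $S$ is regular, so it must be explicitly paired with that hypothesis before it coincides with part (iii) of Corollary~\ref{cor:2.8}, whose formulation already builds regularity in. All of the real difficulty resides in Theorem~\ref{thm:2.3.3}, and with that result in hand the present corollary follows immediately.
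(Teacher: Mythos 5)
Your proposal is correct and is essentially identical to the paper's own proof: the paper introduces Corollary~\ref{cor:2.3.4} with the words ``Reformulating Theorem~\ref{thm:2.3.3} in the terms of Corollary~\ref{cor:2.8}'', which is exactly your chain (i) $\Leftrightarrow S\in\mathcal{C}_{r} \Leftrightarrow S\in\mathcal{CR}$, together with the left--right dual for (ii). (The paper additionally sketches an alternative semantic proof via principal factors and the bicyclic semigroup, but that is presented only as a complement to this argument.)
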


The equivalence of (i) and (ii) in Corollary \ref{cor:2.3.4} does not hold if we drop the assumption that $S$ is
regular: for example the Baer-Levi semigroup~$S$, which is the semigroup
of all injective mappings $\alpha$ on a countably infinite set $X$
such that $X\setminus X\alpha$ is infinite, is right simple, $\mathscr{L}$-trivial
and idempotent-free, so that $\alpha\mathrel{\mathscr{L}}\alpha^{2}$ is never
true \cite[Lemma 8.3 and Ex. 8.1.1]{clipreII}. 

Corollary \ref{cor:2.3.4} may also be proved by a semantic argument that exploits properties of regular semigroups, as we next show.  This complements the syntactic approach of the proof of Theorem \ref{thm:2.3.3}.  However, as stand alone proofs, the lengths of the two arguments are similar. 

\begin{lem}\label{lem:2.3.5}
An inverse semigroup $S$ has the property that
$a\mathrel{\mathscr{R}}a^{2}$ (resp. $a\mathrel{\mathscr{L}}a^{2})$ for all $a\in S$ if and
only if $S$ is a semilattice of groups.
\end{lem}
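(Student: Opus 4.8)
The plan is to establish both directions of the equivalence, using the characterization of inverse semigroups as regular semigroups with commuting idempotents, and the characterization of semilattices of groups (Clifford semigroups) as those inverse semigroups whose idempotents are central.

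First I would prove the easy direction: if $S$ is a semilattice of groups, then $S$ is completely regular, so each $\mathscr{H}$-class is a group and in particular $a \mathrel{\mathscr{H}} a^2$, giving both $a \mathrel{\mathscr{R}} a^2$ and $a \mathrel{\mathscr{L}} a^2$ at once. This follows directly from Proposition~\ref{pro:reg} (the completely regular characterization) combined with the fact that a semilattice of groups is completely regular.

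For the forward direction, suppose $S$ is inverse with $a \mathrel{\mathscr{R}} a^2$ for all $a$ (the $\mathscr{L}$ case being dual). The goal is to show idempotents are central, since an inverse semigroup with central idempotents is exactly a semilattice of groups (a standard fact; see, e.g., the Clifford semigroup characterization in~\cite{how} or~\cite{clipreI}). The plan is to leverage that in an inverse semigroup, $a\mathrel{\mathscr{R}} a^2$ forces the unique idempotent $aa^{-1}$ in $R_a$ to coincide with the unique idempotent $a^{-1}a$ in $L_a$: indeed, $a \mathrel{\mathscr{R}} a^2$ combined with $a^2 \mathrel{\mathscr{L}} a$ (which I would extract from $a \mathrel{\mathscr{R}} a^2$ applied suitably, or directly argue) should yield $a \mathrel{\mathscr{H}} a^2$, whence every $\mathscr{H}$-class is a group and $S$ is completely regular. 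Since a completely regular inverse semigroup has every idempotent lying in a trivial-overlap position, one checks $aa^{-1} = a^{-1}a$ for all $a$, which is precisely the condition making the idempotents central in an inverse semigroup.

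The main obstacle will be cleanly deriving $a \mathrel{\mathscr{L}} a^2$ from $a \mathrel{\mathscr{R}} a^2$ within the inverse setting without simply assuming complete regularity. I would handle this by noting that in an inverse semigroup the map $a \mapsto a^{-1}$ reverses $\mathscr{R}$ and $\mathscr{L}$, so applying the hypothesis $a \mathrel{\mathscr{R}} a^2$ to the element $a^{-1}$ gives $a^{-1} \mathrel{\mathscr{R}} a^{-2} = (a^2)^{-1}$, and dualizing yields $a \mathrel{\mathscr{L}} a^2$; combining gives $a \mathrel{\mathscr{H}} a^2$, so $a$ lies in a subgroup and $S$ is a union of groups, i.e.\ completely regular, forcing $aa^{-1}=a^{-1}a$ and hence central idempotents.
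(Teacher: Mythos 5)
Your proposal is correct and takes essentially the same approach as the paper: apply the hypothesis to $a^{-1}$ and use the fact that inversion exchanges $\mathscr{R}$ and $\mathscr{L}$ to get $a\mathrel{\mathscr{L}}a^{2}$, hence $a\mathrel{\mathscr{H}}a^{2}$, so $S$ is a union of groups and, being inverse, a semilattice of groups. The only cosmetic differences are that the paper carries out the inversion step by explicit element computation ($a^{-1}=a^{-2}x$ yields $a=x^{-1}a^{2}$) and invokes the standard equivalence ``inverse $+$ union of groups $=$ semilattice of groups'' directly, rather than routing through centrality of idempotents as you do.
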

\begin{proof}
For the forward implication, take $a\in S$. From the
given condition it follows that $a^{-1}=a^{-2}x$ for some $x\in S$.
Then $(a^{-1})^{-1}=(a^{-2}x)^{-1}$, which is to say $a=x^{-1}a^{2}$,
from which it follows that $a\mathrel{\mathscr{L}}a^{2}.$ Since $a\mathrel{\mathscr{R}}a^{2}$
is given we have $a\mathrel{\mathscr{H}}a^{2}$ for all $a\in S$, so that $S$
is both a union of groups and an inverse semigroup, whence $S$ is
a semilattice of groups. The converse implication is immediate. The
dual statement is also immediate by symmetry.
\end{proof}

We show that (i) implies (iii) in Corollary~\ref{cor:2.3.4}. Let $a\in S$ and
consider the principal factor $P=J_{a}\cup\{0\}$. Since $S$ is regular and $P$ is a quotient of the principal ideal of $a$ (also regular),
 we have that $P$ is also regular.  It follows that $R_{a}^{S}=R_{a}^{P}$, and  $R_{a}$ is a subsemigroup
of $P$. Suppose further that $P$ is completely $0$-simple. Then
since $a\mathrel{\mathscr{R}}a^{2}$ it follows that $a\mathrel{\mathscr{H}}a^{2}$. In this
case $J_{a}$ is a completely simple subsemigroup of $S$. 

Otherwise $P$ is a $0$-simple semigroup that is not completely $0$-simple, in which case $J_{a}$
contains a copy of the bicyclic semigroup $I=\langle x,y:xy=e\rangle$
for any choice of idempotent $e\in J_{a}$ \cite[Theorem 2.54]{clipreI} or \cite[Theorem 1.3.6]{hig}. 
But then $I$ is an inverse semigroup whose $\mathscr{R}$-classes
are subsemigroups of $I$, whence by Lemma~\ref{lem:2.3.5}, $I$ is a semilattice
of groups, which contradicts that $I$ is the bicyclic semigroup (which
is infinite, bisimple, and $\mathscr{H}$-trivial). It follows that this
case does not arise and so $S$ is indeed completely regular. 

We may also show that for any $S\in\mathcal{C}_{r}$, if for a given
$a\in S$ we have $a=a^{2}xa$, then $a=axa^{2}$ (with the dual statement
equally valid). This also follows once we have established that $\mathcal{C}_{r}=\mathcal{CR}$
for in those circumstances we have $a\mathrel{\mathscr{L}}axa$ and $R_{a}\geq R_{axa}$,
whence it follows that $axa\mathrel{\mathscr{H}}a$, as $D_{a}$ is a completely
simple semigroup. Moreover $e=axa$ is the identity of the group $H_{a}$
for $e$ is idempotent: $e^{2}=(axa)(axa)=ax(a^{2}xa)=axa=e.$ Therefore
$a=ea=axa^{2}$, as claimed. 

Theorem \ref{thm:2.3.3} also yields another pair of EHP bases for completely
simple semigroups.

\begin{pro}\label{pro:2.3.6}
The following are equivalent for a semigroup
$S$.
\begin{enumerate}
\item[(i)] $S$ satisfies $(\forall a,b)(\exists x)\colon a=abxba$.

\item[(ii)] $S$ satisfies $(\forall a,b)(\exists x)\colon a=abxa$.

\item[(iii)] $S$ satisfies $(\forall a,b)(\exists x)\colon a=axba$.

\item[(iv)] $S$ is completely simple.
\end{enumerate}
\end{pro}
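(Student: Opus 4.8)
The plan is to prove the cycle of equivalences (i) $\Rightarrow$ (ii) $\Rightarrow$ (iv) $\Rightarrow$ (iii) $\Rightarrow$ (iv), or some convenient arrangement that routes through the already-established fact that equation \eqref{pro:reg5} characterises $\mathcal{CS}$ (Proposition~\ref{pro:reg}). Since (i) is exactly \eqref{pro:reg5}, the equivalence (i) $\Leftrightarrow$ (iv) is already in hand, so the real work is to show that each of the apparently weaker equations (ii) and (iii) also forces complete simplicity, and conversely that every completely simple semigroup satisfies them. By the evident left-right symmetry between (ii) $a=abxa$ and (iii) $a=axba$, it suffices to treat one of them and invoke duality for the other; I would focus on (ii).

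\medskip

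For the forward direction I would first extract structural consequences from (ii). Setting $b=a$ gives $a=a^2xa$, which is precisely the defining equation of $\mathcal{C}_r$ from Definition~\ref{def:2.3.2}; hence by Theorem~\ref{thm:2.3.3} any $S$ satisfying (ii) is already completely regular. This is the key leverage point: it immediately supplies that $S$ is a union of groups with $\mathscr{D}=\mathscr{J}$ and every $\mathscr{H}$-class a group, so I may freely use Green's-relation machinery. It then remains to upgrade ``completely regular'' to ``completely simple'', i.e. to show $S$ is $\mathscr{J}$-simple. From $a=abxa$ one reads off $a\leq_{\mathscr{J}}b$ (and $a \leq_{\mathscr{R}} ab$), and since $a,b$ range over all pairs, swapping their roles yields $b\leq_{\mathscr{J}}a$, so $J_a=J_b$ for all $a,b$ and $S$ has a single $\mathscr{J}$-class. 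A completely regular $\mathscr{J}$-simple (equivalently simple) semigroup is completely simple, giving (ii) $\Rightarrow$ (iv).

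\medskip

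For the converse (iv) $\Rightarrow$ (ii), let $S$ be completely simple and $a,b\in S$. The cleanest route mirrors the argument already given for \eqref{pro:reg5} in Proposition~\ref{pro:reg}: one works inside the group $\mathscr{H}$-class $H_a$ and shows that a suitable translation map is a bijection of $H_a$ onto itself, so that the required $x$ exists. Concretely, in a completely simple semigroup $a\mathrel{\mathscr{R}}ab$ and $b\mathrel{\mathscr{R}}ba\mathrel{\mathscr{L}}a$, so by Green's Lemma the right translation $\rho_{a}$ and the relevant left translation restrict to bijections between the appropriate $\mathscr{H}$-classes; composing them and taking $x$ in the correct $\mathscr{H}$-class produces $abxa=a$. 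Equation (iii) follows by the left-right dual of this construction, and also by the symmetry already noted. Alternatively, since (i) $\Leftrightarrow$ (iv) is known and (i) trivially implies both (ii) (absorb $b$ on the right into the unknown) and (iii), one could obtain the converse directions almost for free, leaving only the forward implications (ii) $\Rightarrow$ (iv) and (iii) $\Rightarrow$ (iv) as genuine content.

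\medskip

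The main obstacle, and the step warranting the most care, is the forward implication (ii) $\Rightarrow$ (iv): specifically, verifying cleanly that $b=a$ really does reduce (ii) to the $\mathcal{C}_r$ equation so that Theorem~\ref{thm:2.3.3} applies, and then assembling the $\mathscr{J}$-triviality argument without accidentally assuming regularity before it has been secured. The Green's-relation inequalities from $a=abxa$ are routine, so once complete regularity is banked the simplicity argument is short; the delicate point is purely the logical order in which regularity, then complete regularity, then simplicity are established.
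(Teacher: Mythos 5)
Your proposal is correct and follows essentially the same route as the paper: equivalence of (i) and (iv) via Proposition~\ref{pro:reg}, the trivial implications (i)~$\Rightarrow$~(ii) and (i)~$\Rightarrow$~(iii) by absorbing $b$ into the variable, a left-right symmetry reduction to (ii), and then (ii)~$\Rightarrow$~(iv) by setting $b=a$ to invoke Theorem~\ref{thm:2.3.3} followed by the one-$\mathscr{J}$-class observation. The alternative Green's Lemma construction you sketch for (iv)~$\Rightarrow$~(ii) is unnecessary, as you yourself note, since the implications from (i) already give it for free.
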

\begin{proof}
We have by Proposition \ref{pro:reg} that (i) and (iv) are
equivalent. Clearly (i) implies (ii) as given $a=abxba$ we have $a=abya$,
where $y=xb$. Similarly (i) implies (iii). By symmetry it is enough
now to prove that (ii) implies (iv). By taking $b=a$ in (ii) we see
that $S$ satisfies $a=a^{2}xa$, whence by Theorem~\ref{thm:2.3.3}, $S$ is
completely regular. It also follows from (ii) that $S$ has only one
$\mathscr{J}$-class, whence $S$ is completely simple.
\end{proof}

\section{The EHP theorem }\label{sec:EHP}

\subsection{Outline of proof of Theorem \ref{thm:EHP}}

The proof of Theorem \ref{thm:EHP} in the forward direction is simple for it
is clear that if a class of algebras $\mathcal{C}$ is defined by an
equation system then this property is preserved under the taking of
homomorphic images and arbitrary direct products. Moreover, such a
class $\mathcal{C}$ is automatically an elementary class as $\mathcal{C}$
is defined in the first order language. 

The converse direction however is a consequence of Lyndon's
positivity theorem, which states that for an elementary class closed
under taking surjective homomorphic images, a sentence is equivalent
to a positive sentence (one free of negations). Thus we may assume
that our class of algebras $\mathcal{C}$ such that $\mathcal{C\subseteq\,}HP(\mathcal{C})$
is the class of models of a set $\Sigma$ of positive sentences. There
is no loss of generality to assume that all quantifiers are at the
front of the positive sentences. The remaining task then is to show
that disjunctions in each $\rho\in\Sigma$ may be removed.

Consider a sentence $\rho\in\Sigma$. We may express the equation
systems of $\rho$ as a finite conjunction of disjunctions, $\bigwedge_{1\leq i\leq m}\gamma_{i}$,
where each $\gamma_{i}$ is a finite disjunction: $\gamma_{i}=\alpha_{i,1}\vee\dots\vee\alpha_{i,r_{i}}$,
and each $\alpha_{i,j}$ is an atomic formula involving some subset
of the full set of parameters and variables of $\rho$. Suppose that
for some $i$, $r_{i}\geq2$ $(1\leq i\leq m)$. We show that the
conjunct $\gamma_{i}$ may be replaced by some $\alpha_{i,j}$ and
the resulting reduced sentence is equivalent to $\rho$ for any elementary
class $\mathcal{C}$ that is closed under $HP$. Repeating this for each
conjunct $\gamma_{i}$ will see us arrive at the desired $\bigvee$-free
sentence. The quantifiers remain unchanged throughout. 

In view of this, we suppress the symbol $i$ and use the corresponding
symbols $r=r_{i}$, $\gamma=\gamma_{i}$, and $\alpha_{j}=\alpha_{i,j}$.
Let $\rho_{j}$ be the result of replacing $\gamma$ in $\rho$ by
$\alpha_{j}$. Note that $\rho_{j}\vdash\rho$ so that the class of
models satisfied by $(\Sigma\cup\rho_{j})\setminus\rho$ is a subclass
of $\mathcal{C}$. We wish to show that for some $j$ the reverse containment
holds. Assume by way of contradiction that this is not the case. Then
for each $j\,(1\leq j\leq r)$ there exists a model $M_{j}\in\mathcal{C}$
such that $\rho_{j}$ fails in $M_{j}$. Put $M=\Pi_{j=1}^{r}M_{j}\in\mathcal{C}$
and so $M\models\gamma$. (This stage of the argument only requires
that $\mathcal{C}$ is closed under the taking of finitary direct products.
However since $\mathcal{C}$ is an elementary class, $\mathcal{C}$ is closed
under the taking of ultraproducts; $\mathcal{C}$ being closed under
finitary direct products and ultraproducts then implies that $\mathcal{C}$
is in fact closed under the taking of arbitrary direct products.)

The nature of the argument may be illustrated in the simplest case
where $\rho$ has only one pair of existential quantifiers, for instance,
let us say that each $\alpha_{j}$ has a single equation:
\[
\alpha_{j}:(\forall\boldsymbol{a}_{j})(\exists\boldsymbol{x}_{j})\colon u_{j}(\boldsymbol{a}_{j},\boldsymbol{x}_{j})=v_{j}(\boldsymbol{a}_{j},\boldsymbol{x}_{j}),
\]
where $\boldsymbol{a_{j}}$ and $\boldsymbol{x_{j}}$ are the respective
vectors of parameters and variables of the equation $u_{j}=v_{j}$.
Since $M_{j}\not\models\alpha_{j}$ it follows that for $M_{j}$,
$\exists\boldsymbol{a}_{j}$ such that $\forall\boldsymbol{x}_{j}$
$u_{j}(\boldsymbol{a}_{j},\boldsymbol{x}_{j})\neq v_{j}(\boldsymbol{a}_{j},\boldsymbol{x}_{j})$.
Since $M\models\gamma$ we may select $\tilde{\boldsymbol{a}}=(\boldsymbol{a}_{1},\dots,\boldsymbol{a}_{j},\dots,\boldsymbol{a}_{r})$
as our parameter choices for $M$ and there exists a corresponding
choice of variables $\tilde{\boldsymbol{x}}=(\boldsymbol{x}_{1},\dots,\boldsymbol{x}_{j},\dots,\boldsymbol{x}_{r})$
such that for some $j,$ $\boldsymbol{u}_{j}(\tilde{\boldsymbol{a}},\tilde{\boldsymbol{x}})=\boldsymbol{v}_{j}(\tilde{\boldsymbol{a}},\tilde{\boldsymbol{x}})$.
However, taking the projection of this last equation onto the $j$th
component then yields the contradiction that $u_{j}(\boldsymbol{a}_{j},\boldsymbol{x}_{j})=v_{j}(\boldsymbol{a}_{j},\boldsymbol{x}_{j})$. 

In general however, an equation system $\rho\in\Sigma$ may have any
finite number of alternations of existential quantifiers. Since satisfaction
for such a sentence is defined recursively on the string of quantifiers,
the above argument needs to be taken by induction on the number of
quantifiers through the stages outlined in the previous discussion.
This technical argument however does not require any additional facet
to the proof strategy presented in the previous paragraph. The complete
argument is given in \cite[Theorem 3.1]{higjac}.

\subsection{The dual variety theorem for semigroups}

An EHP-class $\mathcal{C}$ defined without the use of the $\exists$
quantifier is a variety, and in particular the class is closed under
the taking of subalgebras. (Birkhoff's theorem says that a class of
algebras $\mathcal{C}$ is defined by a countable list of identities
if and only if $\mathcal{C}$ is closed under the operator $HSP$.) On
the other hand, if the class is defined without the use of the $\forall$
symbol then the class is closed under the taking of superalgebras,
meaning that if $A\in\mathcal{C}$ and $A\leq B$, where $B$ is an algebra
in the defining signature of the algebra class under consideration,
then $B\in\mathcal{C}$ also. 

Here we prove the converse for the class of Semigroups: if an EHP-class
$\mathcal{C}$ is closed under the taking of superalgebras it follows
that $\mathcal{C}$ may be defined by equations of the type $(\exists\,x_{1},\dots,x_{n})\colon (\bigwedge_{1\leq i\leq m}u_i(x_{1},\dots,x_{n})=v_i(x_{1},\dots,x_{n}))$.

\begin{defn}\label{def:3.2.1}
An equation system is called \emph{existential
}if it is has no instances of the $\forall$ quantifier. An EHP-class
is called \emph{existential }if it has a basis of existential equation
systems. 
\end{defn}

In the model theory literature what we call here an existential equation
system is known as a \emph{primitive positive }sentence\emph{. }First
let us suppose that $\mathcal{C}$ is an EHP-class that is closed under
the taking of containing algebras. Suppose that $A$ is an algebra containing
a trivial (one-element) subalgebra, $T$. Then as $T\in\mathcal{C}$ we have
that $A\in\mathcal{C}$ by the containment property. It follows that
if our algebra is of a type where every algebra contains a one-element
algebra, such as Monoids or Groups, then $\mathcal{C}$ is the EHP class
of all algebras of the type under consideration. Moreover, in this
context all algebras satisfy all existential equation systems. It
follows that the converse is trivially true as a class closed under
the taking of containing algebras and a class defined by a basis of
existential equation systems are both necessarily equal to the class
of all algebras.

However, within the class of Semigroups, there are (infinite) semigroups
that are idempotent-free, and so the previous observation does not
apply. For example the equation $(\exists x)\colon x=x^{2}$ defines the
EHP-class of all semigroups that contain an idempotent. This is a
proper class of semigroups that is contained in every existentially
defined EHP-class of semigroups.

\begin{thm}\label{thm:existential}
An EHP-class of semigroups $\mathcal{C}$ is existential
if and only if $\mathcal{C}$ is closed under the taking of containing
semigroups. Equivalently, $\mathcal{C}$ is closed under the taking of
codomains of homomorphisms. 
\end{thm}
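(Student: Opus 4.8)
The plan is to prove the two forward implications and the equivalence of the two closure conditions directly, and then obtain the converse by combining a classical model-theoretic preservation theorem with the same disjunction-elimination device already used in the proof of Theorem~\ref{thm:EHP}.

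First I would dispatch the easy direction. An existential equation system $(\exists x_1,\dots,x_n)\colon \bigwedge_{1\le i\le m} u_i=v_i$ is preserved under homomorphisms: if witnesses $\bar a$ satisfy the equations in $A$ and $h\colon A\to B$ is any homomorphism, then $h(\bar a)$ satisfies them in $B$, since $u_i(h(\bar a))=h(u_i(\bar a))=h(v_i(\bar a))=v_i(h(\bar a))$. Hence an existential EHP-class is closed under codomains of homomorphisms. It then remains to see that, for the $H$-closed class $\mathcal{C}$, closure under codomains and closure under containing semigroups coincide. If $A\le B$ the inclusion is a homomorphism with codomain $B$, so closure under codomains yields closure under containing semigroups. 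Conversely, a homomorphism $h\colon A\to B$ factors as $A\twoheadrightarrow h(A)\hookrightarrow B$; since $h(A)\in\mathcal{C}$ by $H$-closure and $h(A)\le B$, closure under containing semigroups gives $B\in\mathcal{C}$. This establishes the forward implication and the equivalence of the two closure conditions.

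For the converse, assume $\mathcal{C}$ is closed under containing semigroups, equivalently (by the previous paragraph) preserved under homomorphisms. As $\mathcal{C}$ is an elementary class, the homomorphism preservation theorem applies and $\mathcal{C}$ is the model class of a set $\Sigma$ of positive existential sentences; this is the exact analogue, for homomorphisms, of the appeal to Lyndon's positivity theorem in the proof of Theorem~\ref{thm:EHP}, now also absorbing the elimination of $\forall$. Placing the (existential) quantifiers at the front of each $\rho\in\Sigma$, writing its positive quantifier-free matrix in disjunctive normal form, and then distributing $\exists$ over $\vee$, each $\rho$ becomes a finite disjunction $\rho=\rho_1\vee\dots\vee\rho_r$ in which every $\rho_j$ is an existential equation system.

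The remaining task is to eliminate these disjunctions, which I would do exactly as in the outline of Theorem~\ref{thm:EHP}, but using only finite products. I claim that for each $\rho=\rho_1\vee\dots\vee\rho_r\in\Sigma$ there is an index $j_0$ with $\mathcal{C}\models\rho_{j_0}$; replacing each $\rho$ by the corresponding $\rho_{j_0}$ then produces an existential basis, since $\rho_{j_0}\vdash\rho$ gives one inclusion of model classes and $\mathcal{C}\models\rho_{j_0}$ gives the reverse. Suppose no such $j_0$ exists: then for each $j$ choose $M_j\in\mathcal{C}$ with $M_j\not\models\rho_j$, and set $M=\prod_{j=1}^r M_j$, which lies in $\mathcal{C}$ by closure under finite products. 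Since $M\models\rho$ we have $M\models\rho_{j_0}$ for some $j_0$; but an existential equation system that holds in a product holds in every factor, because projecting its witnesses along $\pi_{j_0}\colon M\to M_{j_0}$ yields witnesses in $M_{j_0}$. Thus $M_{j_0}\models\rho_{j_0}$, contradicting the choice of $M_{j_0}$. The main obstacle is the invocation of the homomorphism preservation theorem, which supplies both the existential and the positive form simultaneously; the disjunction-elimination step, by contrast, is a routine reprise of the finite-product argument already used for Theorem~\ref{thm:EHP}, and notably requires no recourse to ultraproducts or infinite products here.
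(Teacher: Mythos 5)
Your proof is correct, and your easy direction and the equivalence of the two closure conditions match the paper; but your converse takes a genuinely different route. The paper never invokes the homomorphism preservation theorem. Instead it starts from an arbitrary basis $E$ of equation systems for $\mathcal{C}$ and performs an explicit, algebraic quantifier elimination: for a semigroup $S$ it forms the free product $F*S$ with a countably generated free semigroup, evaluates the parameters as fresh free generators (the ``canonical evaluation'' subject to the ``free dependency condition''), and reads off from normal forms in $F*S$ a system of equalities between ``$S$-runs''; these assemble into an existential equation system $\varepsilon_c$ with $S\models\varepsilon_c$ and $\varepsilon_c\vdash\varepsilon$ (Lemmas \ref{lem:FUU} and \ref{lem:ece}). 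Closure under containing semigroups enters exactly once, to put $F*T$ back into $\mathcal{C}$, and the role played by your disjunction elimination is taken by a finite direct product indexed by the finite set $H(\varepsilon)$ of possible sentences $\varepsilon_c$. What your route buys: it is shorter, it reuses the Lyndon-style template already displayed for Theorem \ref{thm:EHP} (preservation theorem plus finite-product disjunction elimination), and it works verbatim in any algebraic signature. What the paper's route buys: it is self-contained modulo elementary facts about free products --- the classical homomorphism preservation theorem is itself a nontrivial compactness-based result --- and the free-product machinery it develops is exactly what is reused to prove the decidability result, Theorem \ref{thm:makanin}. One point you should make explicit: the preservation theorem must be applied \emph{relative to the background class of semigroups}, i.e., the conclusion can only be that $\mathcal{C}$ is the class of semigroups satisfying a set $\Sigma$ of positive existential sentences, not the unrelativized model class of $\Sigma$, since associativity is positive but not existential. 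The relativized statement does hold (elementary extensions of semigroups are semigroups, so the standard compactness argument goes through, and the final step uses that elementary classes are closed under elementary equivalence), and with that reading your argument, including the projection step $M\models\rho_{j_0}\Rightarrow M_{j_0}\models\rho_{j_0}$, is sound.
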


Before we embark on the main proof we observe the equivalence with the second sentence. Suppose that the EHP-class $\mathcal{C}$ is closed under
the taking of containing semigroups and let $S\in\mathcal{C}$ with $\alpha:S\rightarrow T$
a homomorphism. Since $\mathcal{C}$ is closed under the taking of homomorphisms,
$S\alpha\in\mathcal{C}$, and since $S\alpha\leq T$ it then follows
that $T\in\mathcal{C}$. Conversely suppose that $\mathcal{C}$ is closed
under the taking of codomains of homomorphisms and suppose that $S\leq T$.
We take $\alpha:S\rightarrow T$ to be the identity mapping on $S$
with codomain $T$, whence by the given condition $T\in\mathcal{C}$.

Most of the remainder of the section is devoted to the proof of Theorem \ref{thm:existential}, which is completed after some preliminary lemmas and discussion. 
The main challenge of the proof is facilitating a kind of quantifier elimination, achieved using the free product construction.  For any semigroup $S$, we consider the free product $F*S$  of $S$ with the free semigroup $F=F_A$ on a countably infinite alphabet $A=\{A_1,A_2,\dots\}$.

Consider an arbitrary equation system $\varepsilon$ satisfied by 
$F*S$: a quantified system $p_1=q_1\wedge\dots\wedge p_\ell=q_\ell$.  As $\varepsilon$ is satisfied, for every evaluation of the parameters (universally quantified variables in $\varepsilon$) in $F*S$, we may find witnesses to the existentially quantified variables, with the choice of each witness being made on the basis of prior quantified variables.  As the parameters can be chosen without restriction, we are going to adopt the strategy that each parameter is chosen to be a free generator from the set $A$ that has not appeared within the evaluation of any variable quantified before it: we refer to this as the \emph{free dependency condition}.  Thus if we have $\forall a_1\exists x_1\forall a_2$, we choose $a_1$ to be $A_1$ and if we have chosen the witness $x_1$ to evaluate as $A_2A_1sA_5$ for some $s\in S$, then we will choose $a_2$ to take the value $A_3$ (or any other free generator except for those already in use: $A_1,A_2,A_5$ in the example).  Without loss of generality however, it is clear that we may rename the free generators so that each parameter $a_i$ is assigned the free generator $A_i$: in the example just given we could rename $A_2$ and $A_3$ and choose the witness $A_3A_1sA_5$ for $x_1$ and then choose $a_2\mapsto A_2$.   We refer to this as an instance of a \emph{canonical evaluation of parameters}, and we say that $F*S$ \emph{satisfies $\varepsilon$ under the canonical evaluation of parameters} to mean that witnesses to the existential variables can be made to achieve equality $p_1=q_1,\dots, p_\ell=q_\ell$ under the evaluation (and satisfying the free dependency condition).  
The following lemma holds in any variety within any signature of algebras, replacing ``free semigroup'' with a relatively free algebra in the variety.  We use  $U$ to denote a semigroup instead of $S$ to match  later usage of the lemma.
\begin{lem}\label{lem:FUU}
Let $\varepsilon$ be an equation system with parameters $a_1,\dots, a_p$ and $U$ be a semigroup.  Let $F=F_A$ be the denumerably generated free semigroup with free generators $A=\{A_1,A_2,\dots\}$.  If $F*U$ satisfies $\varepsilon$ under the canonical parameter evaluation, then $U$ satisfies $\varepsilon$.
\end{lem}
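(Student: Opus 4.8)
The plan is to transport a winning choice of existential witnesses from $F*U$ back to $U$ by means of a retraction homomorphism, relying on the free dependency condition to keep the transported witnesses compatible with the order of the quantifier prefix. First I would record what the hypothesis provides: under the canonical evaluation the parameters $a_1,\dots,a_p$ are \emph{forced} to equal $A_1,\dots,A_p$, so the universal player retains no freedom, and the assumption that $F*U$ satisfies $\varepsilon$ under this evaluation supplies fixed elements $w_1,\dots,w_q\in F*U$, one per existential variable $x_k$, for which every equation $p_\ell=q_\ell$ of $\varepsilon$ holds after the substitution $a_i\mapsto A_i$, $x_k\mapsto w_k$. The key structural fact I would then extract is that, by the free dependency condition, each $w_k$ is free of the generator $A_i$ whenever $a_i$ is quantified after $x_k$ in the prefix: when $a_i$ received its fresh generator, the witness $w_k$ had already been chosen and did not contain it. Thus $w_k$ is a word in the generators $A_1,\dots,A_{i(k)}$, where $a_{i(k)}$ is the last parameter preceding $x_k$, together with finitely many ``internal'' generators never used as parameters and finitely many elements of $U$.

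Next, given an arbitrary evaluation $a_i\mapsto u_i\in U$ by the universal player in $U$, I would invoke the universal property of the free product (the coproduct of $F$ and $U$) to define a homomorphism $\phi\colon F*U\to U$ by setting $\phi(A_i)=u_i$ for $1\le i\le p$, sending every remaining free generator to a fixed element $u_0\in U$, and letting $\phi$ restrict to the identity on $U$. Applying $\phi$ to each of the satisfied equations and using that $\phi$ is a homomorphism yields $\phi(p_\ell)=\phi(q_\ell)$ in $U$; that is, every equation of $\varepsilon$ holds in $U$ under $a_i\mapsto u_i$ and $x_k\mapsto\phi(w_k)$.

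The step I expect to be the crux is verifying that the elements $\phi(w_k)$ constitute a \emph{legitimate} witnessing strategy, meaning that each depends on no parameter quantified after $x_k$. This is exactly where the structural fact pays off: since $w_k$ omits every generator $A_i$ with $a_i$ following $x_k$, its image $\phi(w_k)$ is a function of $u_1,\dots,u_{i(k)}$ and $u_0$ alone, and so is an admissible Skolem witness of the correct arity for $x_k$. As the parameter evaluation $(u_1,\dots,u_p)$ was arbitrary, we conclude that $U$ satisfies $\varepsilon$. I would note finally that the argument uses no semigroup-specific feature beyond the existence of the coproduct with a free object, so it transfers verbatim to any variety once ``free semigroup'' is replaced by the relatively free algebra, matching the remark preceding the lemma.
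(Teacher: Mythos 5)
Your proposal is correct and follows essentially the same route as the paper's proof: extend the assignment $A_i\mapsto u_i$ to a retraction $\phi\colon F*U\to U$, push the canonical witnesses forward to $\phi(w_k)$, and invoke the free dependency condition to ensure each transported witness depends only on parameters quantified before it. Your write-up merely makes explicit (the Skolem-function legitimacy check and the treatment of unused generators) what the paper compresses into its proof and the remark immediately following it.
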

\begin{proof}
Every function $\phi:A\to U$ extends to a retraction from $F*U$ onto the subsemigroup $U$.  We have witnesses $X_1,\dots, X_q\in F*U$ for the canonical evaluation of parameters $a_1,\dots,a_p$ as $A_1,\dots,A_p$ satisfying the free dependency condition.  Let $\gamma$ be any evaluation of the parameters of $\varepsilon$ in $U$, and let $\phi:A\to U$ be $\phi(A_i):=\gamma(a_i)$.  We may extend $\phi$ to a retraction onto $U$, and use witnesses $\phi(X_1),\dots,\phi(X_q)$ in $U$ to verify satisfaction of $\varepsilon$.  
\end{proof}
Note that the free dependency condition is used only at the final step of this proof: if $x_i$ is (existentially) quantified prior to (the universally quantified) $a_j$, we should have that the choice of $\phi(X_i)$ can yield satisfaction of $\varepsilon$ for all subsequent evaluations of $a_j$.  But if $X_i$ contained an occurrence of $A_j$, then the value of $\phi(X_i)$ would in general depend on the evaluation $\gamma(a_j)=\phi(A_j)$ of $a_j$.

Every element of $F*S$ may be written uniquely in the form 
$f_{1}s_{1}f_2\dots s_{k-1}f_{k}$, where each $f_i$ is an element of $F_A$, each $s_i$ is an element of $S$ and  $f_1$ and possibly $f_k$ could be empty (though note that if $k=1$, then this expression is simply $f_1$ and then $f_1$ cannot be empty).  We refer to this as the \emph{normal form} for an element of $F*S$.  If $z_1\dots z_p$ is an arbitrary semigroup word  (possibly with repeats in the sequence of letters $z_1,\dots,z_p$), then under any evaluation of the letters $\{z_1,\dots,z_p\}$ into $F*S$, the word $z_1\dots z_p$ gives rise to a product of normal forms
\begin{equation}
\prod_{1\leq i\leq p}(f_{i,1}s_{i,1}f_{i,2}\dots s_{i,k_i-1}f_{i,k_i}).\label{eq:prod2}
\end{equation}
Depending on the value of $k_i$, and on whether $f_{i,1}$ or $f_{i,k_i}$ are empty, the product in \eqref{eq:prod2} may give rise to sequences of consecutive instances of elements of $S$; a maximal block of consecutive elements of $S$ that arises from such a product will be called an \emph{$S$-run}.  So for example, if letters $z_1,z_2,z_3,z_4$ are evaluated in $F*S$ as $A_1s_1$, $s_2A_2s_1A_3s_3$, $s_2$ and $s_1A_1$ respectively, then the word $z_1z_2z_3z_4$ evaluates as $(A_1s_1)(s_2A_2s_1A_3s_3)(s_2)(s_1A_1)$, and we have $S$-runs $s_1s_2$, $s_1$ and $s_3s_2s_1$.  These of course collapse to individual elements of $S$ in the reduction of \eqref{eq:prod2} to normal form, but we are interested in the uncollapsed form.  For each $S$-run we may also create an \emph{abstract run}, which is a matching semigroup word in the alphabet $\{x_s\mid s\in S\}$ of variables  indexed by elements of $S$; so the $S$-run $s_3s_2s_1$ becomes $x_{s_3}x_{s_2}x_{s_1}$.

If $F*S$ satisfies an equation system $\varepsilon$ under the canonical evaluation of parameters, then the chosen witnesses in $F*S$ will provide a collection of equalities between $S$-runs.  More precisely, each equality in $\varepsilon$ produces an equality in $F*S$ of the form
\begin{equation}
f_{1}S_{1}f_2\dots S_{k_i-1}f_{k_i} = g_{1}T_{1}g_2\dots T_{\ell_i-1}g_{\ell_i}
\end{equation}
where $S_1,\dots, S_{k_i-1}$ and $T_1,\dots,T_{\ell_i-1}$ are $S$-runs.  As this is an equality holding in the free product, it follows that $k_i=\ell_i$ and that $S_1=T_1, \dots, S_{k_i-1}=T_{k_i-1}$ in $S$ (and that $f_i=g_i$ are identical as words in $A^+$, or empty).   For such a choice $c$ of witnesses to the canonical evaluation, let $\varepsilon_c$ denote the existential equation systems consisting of the conjunction of the equalities arising from the resulting abstract $S$-runs, across the witnessing evaluations of all the equalities $p_1=q_1\wedge\dots\wedge p_\ell=q_\ell$.    As an example, consider the equation system 
\begin{multline*}
\varepsilon:(\forall a_1)(\exists x_1)( \forall a_2)(\exists x_2\exists x_3)\colon  (a_1a_2x_1x_2x_3a_1=a_1x_2a_1a_2x_3x_1\\
\wedge a_1x_3a_1=a_1x_3x_3a_1),
\end{multline*}
and consider a semigroup $S$ in which the canonical evaluation of $a_1,a_2$ into $F*S$ has choices for $x_1,x_2,x_3$ satisfying the free dependency condition that lead to satisfaction of the equalities: an example might be $X_1:=s_1A_1$, $X_2:=A_2s_1$ and $X_3=s_2$.  (Note that as $x_1$ is quantified prior to $a_2$ the free dependency condition requires that the word $X_1$ should not involve $A_2$, because the value of $x_1$ should not in general depend on the choice of $a_2$.)  In this hypothetical scenario, we have 
\[
(A_1)(A_2)(s_1A_1)(A_2s_1)(s_2)(A_1)=(A_1)(A_2s_1)(A_1)(A_2)(s_2)(s_1A_1)
\]
 and 
 \[
 (A_1)(s_2)(A_1)=(A_1)(s_2)(s_2)(A_1).
 \] 
 From the first equality we find that $s_1=s_1$ and $s_1s_2=s_2s_1$.  From the second equality we find that $s_2=s_2s_2$.  Then for this choice $c$ we obtain $\varepsilon_c$ as 
 \[
 (\exists x_{s_1}\exists x_{s_2})\colon  (x_{s_1}=x_{s_1}\wedge x_{s_1}x_{s_2}=x_{s_2}x_{s_1}\wedge x_{s_2}=x_{s_2}x_{s_2}).
 \]  
Obviously the equality $x_{s_1}=x_{s_1}$ here is redundant and could be removed.  One can further see that $s_1$ could be replaced by $s_2$ in this sentence, so that $\varepsilon_c$ is logically equivalent to $(\exists x)\ x=x^2$.
\begin{lem}\label{lem:ece}
Let $\varepsilon$ be an equation system satisfied by $F*S$, under the canonical parameter evaluation for some semigroup $S$.  Then for any choice $c$ of witnesses in $F*S$ to the satisfaction of the equalities in $\varepsilon$ we have $S\models \varepsilon_c$ and $\varepsilon_c\vdash \varepsilon$ within the class of semigroups.
\end{lem}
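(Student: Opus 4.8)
The plan is to verify the two assertions separately: the first by exhibiting an explicit witnessing evaluation in $S$, and the second by reducing, via Lemma~\ref{lem:FUU}, to showing that $F*U$ satisfies $\varepsilon$ under the canonical parameter evaluation whenever a semigroup $U$ satisfies $\varepsilon_c$.

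For the claim $S\models\varepsilon_c$, recall that $\varepsilon_c$ is the existential conjunction of the abstract run equalities obtained from the witness choice $c$, and that its existential variables are the symbols $x_s$ indexed by those $s\in S$ that occur in the resulting $S$-runs. I would simply evaluate each $x_s$ as the element $s$ itself. Under this assignment each abstract run $x_{s_1}\cdots x_{s_m}$ returns the corresponding $S$-run $s_1\cdots s_m$. The equalities collected in $\varepsilon_c$ record exactly the relations $S_i=T_i$ between $S$-runs that were forced in $S$ by reading off the free-product equalities $f_1S_1\cdots f_{k_i}=g_1T_1\cdots g_{k_i}$ produced by $c$; hence under $x_s\mapsto s$ every conjunct of $\varepsilon_c$ becomes a true equality in $S$, giving $S\models\varepsilon_c$.

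For the entailment $\varepsilon_c\vdash\varepsilon$, let $U$ be any semigroup with $U\models\varepsilon_c$ and fix witnesses $u_s\in U$ for the variables $x_s$. By Lemma~\ref{lem:FUU} it suffices to show that $F*U$ satisfies $\varepsilon$ under the canonical parameter evaluation. I would construct the required witnesses in $F*U$ by using the witnesses of $c$ as templates: each witness $X_i\in F*S$ has a normal form $f_1 s_1 f_2\cdots s_{k-1}f_k$, and I define $X_i'\in F*U$ by leaving every free factor $f_j$ untouched and replacing each isolated element $s_j$ of $S$ by $u_{s_j}$. Because the free-generator skeleton of each $X_i'$ is identical to that of $X_i$, the free dependency condition carried by $c$ is inherited by the $X_i'$, and the $u_s$ are fixed independently of any parameter, so the quantifier ordering of $\varepsilon$ is respected. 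Computing each side $p_j,q_j$ of an equality of $\varepsilon$ under the canonical evaluation $a_i\mapsto A_i$ with these witnesses reproduces exactly the free-generator skeleton seen for $S$, while every $S$-run $s_1\cdots s_m$ is replaced by the $U$-run $u_{s_1}\cdots u_{s_m}$, which is precisely the value of the abstract run $x_{s_1}\cdots x_{s_m}$ under $x_s\mapsto u_s$. Since $U\models\varepsilon_c$ via the $u_s$, corresponding $U$-runs coincide; combined with the matching free parts this forces $p_j=q_j$ in $F*U$. Thus $F*U$ satisfies $\varepsilon$ under the canonical evaluation, and Lemma~\ref{lem:FUU} yields $U\models\varepsilon$.

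The routine parts are the two substitution computations; the point requiring care is the bookkeeping in the second direction. One must check that passing from $X_i$ to $X_i'$ changes neither the positions of the free generators nor, therefore, where the runs break, so that the multiset of $U$-runs produced is exactly the image of the $S$-runs under $s\mapsto u_s$, and that the free dependency condition (which encodes the legitimacy of the witnesses for the quantifier prefix) survives the substitution. Once this correspondence between $S$-runs and $U$-runs is pinned down, both the satisfaction $S\models\varepsilon_c$ and the entailment $\varepsilon_c\vdash\varepsilon$ follow formally.
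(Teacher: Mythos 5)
Your proposal is correct and follows essentially the same route as the paper's proof: the first claim is established by the tautological evaluation $x_s\mapsto s$ implicit in the construction of $\varepsilon_c$, and the entailment $\varepsilon_c\vdash\varepsilon$ is proved by substituting the witnesses $u_s$ for the $S$-elements in the normal forms of the $X_i$ to get witnesses in $F*U$ (preserving the free-generator skeleton and hence the free dependency condition) and then invoking Lemma~\ref{lem:FUU}. Your write-up merely spells out in more detail the run-matching bookkeeping that the paper's proof states compactly.
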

\begin{proof}
Fix a choice $c$ of witnesses, $x_i\mapsto X_i\in F*S$.  The construction of $\varepsilon_c$ from this choice trivially ensures that $S\models \varepsilon_c$, which proves the first claim.  Now assume that $U$ is any semigroup satisfying $\varepsilon_c$; we show that $U\models \varepsilon$.  We may find witnesses to the canonical evaluation of parameters in $F*U$, as the witnesses to satisfaction of $\varepsilon_c$ in $U$ provide the required $U$-runs to match those from $S$ in the satisfaction of $\varepsilon$ in $F*S$ under the canonical parameter evaluation: note that these will not violate the free dependency condition for existential witnesses, as they only provide the required $U$-runs, with any free parameters chosen in the evaluation of a variable $x_i$ following whatever was done in $S$ for the choice $c$ (which satisfied the free dependency condition).  Then Lemma \ref{lem:FUU} shows that~$U$ satisfies $\varepsilon$.  
\end{proof}
Referring to the example given prior to Lemma \ref{lem:ece}, where $\varepsilon_c$ was logically equivalent to $(\exists x)\ x=x^2$, the statement $\varepsilon_c\vdash \varepsilon$ in Lemma \ref{lem:ece} is asserting that any semigroup containing an idempotent will satisfy 
\[
(\forall a_1)(\exists x_1)(\forall a_2)(\exists x_2\exists x_3)\colon  (a_1a_2x_1x_2x_3a_1=a_1x_2a_1a_2x_3x_1\wedge a_1x_3a_1=a_1x_3x_3a_1).
\]

For a given equation system $\varepsilon$ and any $S$ for which $F*S$ satisfies $\varepsilon$ under the canonical parameter evaluation, the maximal length  of any $S$-run is bounded by the maximal number of consecutively adjacent existential variables within a word occurring in $\varepsilon$.  Thus there is an upper bound $\ell$ on the number of $S$-runs of length more than $1$ that appear in $\varepsilon_c$ (independent of $S$ and $c$).  Because equalities between $S$-runs of length~$1$ are trivial (they yield equalities $x_s=x_s$ for some $s\in S$), the number of nontrivial conjuncts within $\varepsilon_c$ is at most $\ell$ also, so that up to logical equivalence, there are only finitely many different existential equation systems of the form $\varepsilon_c$, with the number determined by the structure of the sentence $\varepsilon$ only.  We let $H(\varepsilon)$ denote this (finite) set of existential sentences.  The case of $H(\varepsilon)=\varnothing$ is possible, and corresponds to the situation where there are no nontrivial $S$-runs in any canonical evaluation, for any $S$.  In this situation, note that $\varepsilon$ is trivially equivalent to the equation system $\varepsilon'$ obtained by including in $\varepsilon$ the conjunct $x=x$, where $x$ is a new (existentially quantified) variable.  Because any element of $F*S$ will satisfy $x=x$, we have the same witnesses as previously for $\varepsilon$ (which by assumption all avoided any $S$-runs), along with an arbitrary witness for $x$.  Thus $H(\varepsilon')=\{(\exists x)\colon x=x\}$, so that we may let $\varepsilon_c$ denote the equation system $(\exists x)\colon x=x$.

\begin{proof}[Proof of Theorem \ref{thm:existential}.]
Clearly an existential class is closed under the taking of containing
semigroups. Conversely let us suppose that $\mathcal{C}$ is an EHP-class
of semigroups that is closed under the operation of taking containing
semigroups.  

Let $\mathcal{B}$ be the existential class with EHP basis $E'$, which
is the set of all existential equation systems that are satisfied
by all members of $\mathcal{C}$. Clearly $\mathcal{C}\subseteq\mathcal{B}$,
and indeed~$\mathcal{B}$ is the smallest existential class that contains
$\mathcal{C}$. Our task is to prove the reverse containment. 

Let $E$ be a set of equation systems characterising $\mathcal{C}$ and consider any $\varepsilon\in E$.  
We will show that there is an equation of the form $\varepsilon_c$ that can replace $\varepsilon$.  
Repeated application, across all members of $E$ leads to a subset of $E'$, which will complete the proof.

As a first step we need to show that there is an equation of the form $\varepsilon_c$ that holds on all members of~$\mathcal{C}$.  We may assume that $H(\varepsilon)$ is not empty.  Assume for contradiction that for each $\varepsilon_c\in H(\varepsilon)$ there is $S_c\in \mathcal{C}$ that fails~$\varepsilon_c$.  As $\mathcal{C}$ is closed under taking direct products we have that $T:=\prod_{\varepsilon_c\in H(\varepsilon)}S_c\in \mathcal{C}$ and then as $\mathcal{C}$ is closed under taking containing semigroups we have that $F*T\models \varepsilon$.  But then $T\models \varepsilon_c$ for some choice of witnesses to the canonical parameter evaluation.  But then all quotients of $T$ satisfy $\varepsilon_c$, contradicting the assumption that $S_c$ fails~$\varepsilon_c$.

%Now assume that $H(\varepsilon)$ is empty.  This is somewhat of a degenerate case, as it is apparent that $\varepsilon$ can be removed entirely from $E$, or equivalently replaced by the tautology $(\exists x)\colon x=x$, which trivially holds in all semigroups.  One way to see this is to observe that we may replace $\varepsilon$ by a new equation system by introducing a new existentially quantified variable $x$, not appearing elsewhere in $\varepsilon$, and adding the conjunct $x=x$.  This new equation system $\varepsilon'$ holds equivalently to $\varepsilon$, and $H(\varepsilon')=\{(\exists x)\colon x=x\}$, so that we may choose $\varepsilon_c$ to be $(\exists x)\colon x=x$. 

Thus for every $\varepsilon\in E$ there is an existential equation system of the form $\varepsilon_c$ such that $\varepsilon_c\in E'$.  By Lemma \ref{lem:ece} we have that $\varepsilon_c\vdash \varepsilon$, so that $\varepsilon_c$ can replace $\varepsilon$ in $E$.
\end{proof}

The existential equations that are
satisfied by every semigroup are explicitly identified in \cite[Corollary
6.7]{higjac}.
To conclude this section we note that this may be extended, at least at the level of an algorithmic solution to arbitrary equation systems.  Satisfying equations in $F*S$ by canonical parameter evaluation is somewhat reminiscent of solving equations in free semigroups, a problem originally solved by Makanin \cite{mak}, and of continued interest and development.  The connection turns out to be genuine, and a strong form of Makanin's algorithm can be used to show that it is decidable to determine if an equation system holds in the variety of all semigroups.  
\begin{thm}\label{thm:makanin}
The class of equation systems satisfied in the class of all semigroups is decidable.
\end{thm}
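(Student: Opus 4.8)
The plan is to decide whether a given equation system $\varepsilon$ holds in the variety of all semigroups by reducing the question to the solvability of a single system of word equations with rational constraints over a free semigroup, and then invoking the strong form of Makanin's algorithm. Write the parameters of $\varepsilon$ as $a_1,\dots,a_p$ and its existentially quantified variables as $x_1,\dots,x_q$. First I would form the associated constrained system: evaluate the parameters canonically as $a_i\mapsto A_i$ inside the free semigroup on $\{A_0,A_1,\dots,A_p\}$ (where $A_0$ is an auxiliary free generator corresponding to no parameter), introduce a word unknown $X_j$ for each $x_j$, replace each equality $u_i=v_i$ of $\varepsilon$ by the word equation $u_i[A,X]=v_i[A,X]$, and impose on each $X_j$ the constraint that it be a word over $\{A_0\}\cup\{A_i : a_i \text{ is quantified before } x_j\}$ --- equivalently, that $X_j$ avoid every generator $A_i$ whose parameter is quantified after $x_j$. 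This avoidance condition is precisely the free dependency condition of the preceding discussion, recast as membership in a rational (indeed regular) language; note that when $\varepsilon$ has the shape $(\forall\cdots)(\exists\cdots)$ these constraints are vacuous, whereas for genuine alternation (an $\exists$ preceding a $\forall$) they are essential, since they forbid an existential witness from secretly depending on a later parameter.

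The heart of the matter is the equivalence: $\varepsilon$ holds in every semigroup if and only if this constrained system is solvable in the free semigroup. For the backward direction I would appeal directly to Lemma~\ref{lem:FUU}. A solution consists of witnesses $X_1,\dots,X_q$ lying in $F=F_A$, so for \emph{every} semigroup $U$ the free product $F*U$ satisfies $\varepsilon$ under the canonical parameter evaluation (the equalities hold as identities already in $F$, and the constraints guarantee the free dependency condition); Lemma~\ref{lem:FUU} then yields $U\models\varepsilon$ for all $U$. For the forward direction, suppose $\varepsilon$ holds in all semigroups, so in particular the free semigroup on $\{A_0,\dots,A_p\}$ models $\varepsilon$. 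Applying the canonical evaluation of parameters, I would extract witnesses for $x_1,\dots,x_q$ that respect the free dependency condition, and then project any surplus free generators onto $A_0$: since $A_0$ is permitted for every variable, this projection preserves all word identities and violates no dependency constraint, so the witnesses may be taken as words over $\{A_0,\dots,A_p\}$, giving a solution of the constrained system.

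With the equivalence established, the reduction is completed by two observations. The constrained system is effectively constructible from $\varepsilon$, since the equations, the unknowns, and the forbidden-generator set attached to each $X_j$ are all read off mechanically from the words and the quantifier prefix of $\varepsilon$. And the solvability of a finite system of word equations over a finite alphabet, subject to regular constraints on the unknowns, is decidable: after the standard reduction of a finite system to a single equation, this is exactly the setting treated by the strong form of Makanin's algorithm for rationally constrained word equations, building on \cite{mak}. Pairing this decidable solvability test with the equivalence above decides whether $\varepsilon$ holds in all semigroups.

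I expect the main obstacle to be the forward direction of the equivalence, specifically the passage from ``$\varepsilon$ holds in the free semigroup'' to ``witnesses exist over the bounded alphabet $\{A_0,\dots,A_p\}$ respecting the free dependency condition''. One must argue that the Skolem witnesses furnished by satisfaction can be chosen to avoid later parameters and to use no surplus generators; this is where the canonical parameter evaluation and the projection-onto-$A_0$ device carry the load, and where the rational constraints acquire their precise meaning. By contrast, the appeal to Makanin's algorithm with regular constraints is used purely as a black box, so the genuinely semigroup-theoretic content of the proof is confined to the reduction itself.
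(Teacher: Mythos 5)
Your reduction is in essence the paper's own proof: encode the canonical parameter evaluation as a word-equation problem with the parameters as constants, encode the free dependency condition as letter-avoidance (hence regular/rational) constraints on the unknowns, and invoke the Lothaire extension of Makanin's algorithm; your backward direction via Lemma~\ref{lem:FUU} is exactly the paper's appeal to the universal mapping property of the free semigroup with respect to an arbitrary semigroup. Your auxiliary generator $A_0$ is even a small refinement, since Makanin--Lothaire is stated over finite alphabets while the paper works somewhat informally over a denumerable one.

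However, your forward direction, as written, has a genuine gap. You instantiate satisfaction of $\varepsilon$ in the free semigroup on the \emph{finite} alphabet $\{A_0,\dots,A_p\}$ and then assert that ``applying the canonical evaluation of parameters'' yields witnesses respecting the free dependency condition. This is unjustified: ordinary satisfaction under the evaluation $a_i\mapsto A_i$ produces Skolem witnesses which, as words, may perfectly well contain letters $A_j$ assigned to parameters quantified \emph{after} them --- the fact that the witnessing strategy does not depend on $a_j$ does not prevent the letter $A_j$ from occurring in its value. The device that eliminates such occurrences (and is built into the paper's definition of the canonical evaluation) is to choose each new parameter as a generator \emph{fresh} with respect to all previously chosen witnesses, and then rename; this needs an unbounded supply of generators, because a single witness word can already exhaust every letter of a fixed finite alphabet. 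Correspondingly, your ``projection onto $A_0$'' has nothing to act on inside $\{A_0,\dots,A_p\}^+$, where no surplus generators exist. The repair is immediate and uses only ingredients already in your write-up: from the hypothesis that $\varepsilon$ holds in all semigroups, pass to the \emph{denumerable} free semigroup $A^+$ (as the paper does), run the fresh-generator-plus-renaming argument there to obtain witnesses satisfying free dependency, and only then apply your projection sending every surplus generator to $A_0$; this is a homomorphism fixing $A_1,\dots,A_p$, so it preserves the word equalities, and it respects the avoidance constraints since $A_0$ is never a forbidden letter. With this one correction your equivalence, and hence the decision procedure, is sound and coincides with the paper's argument.
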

\begin{proof}
As we now explain, Theorem \ref{thm:makanin} is a direct corollary of Makanin's celebrated solution \cite{mak} to solvability of equations over free semigroups, as extended to allow for rational constraints (see Chapter 12 of \cite{lot}).
 In the context of Makanin's algorithm, a system of equations on the free semigroup consists of a finite set of equalities $\varepsilon$ between words in alphabet $\{A_1,A_2,\dots\}\cup\{X_1,X_2,\dots\}$ and we are asked whether there is a satisfying evaluation of the variables $X_i$ in the free semigroup $A^+$ (where $A=\{A_1,A_2,\dots\}$ as before).  In the absence of any constraint on the choice of the $X_i$, this coincides  with what we called the canonical parameter evaluation of the generators $A_1,A_2,\dots$ (as themselves) in the equation system $(\forall A_1,\dots,A_n)(\exists X_1,\dots,X_m)\varepsilon$ (for suitable $n,m$ determined by the variables that appear in $\varepsilon$).    Lothaire \cite[\S12.1.8]{lot} details an extension of Makanin's algorithm to allow for the variables $X_1,X_2,\dots$ to be constrained by rational languages $\lambda_1,\lambda_2,\dots$ over the alphabet $A$.  This enables us to additionally enforce the free dependency condition for an equation system~$\varepsilon$: constrain each variable~$X_i$  to lie in the rational language $\lambda_i$  excluding letters in~$A$ that are quantified in $\varepsilon$ to the right of the existential quantification of $X_i$.  Satisfaction of this constrained instance of the equation problem coincides in definition precisely with satisfaction of $\varepsilon$ under the canonical parameter evaluation with free dependency condition holding.  But this latter property is equivalent to unconditional satisfaction of the equation system $\varepsilon$: for the nontrivial direction, use the universal mapping property of $A^+$ with respect to itself (or alternatively use Lemma \ref{lem:ece}, using the fact that $A^+*A^+\cong A^+$).  Thus the extension of Makanin's algorithm in \cite[\S12.1.8]{lot} can be used to decide satisfaction of arbitrary equation systems on $A^+$.
 
 Finally, the equation systems true on $A^+$ (with $A$ a denumerable alphabet) are precisely those in the class of all semigroups.  One direction of this claim follows trivially from the fact that $A^+$ is a semigroup.  For the other direction, use the universal mapping property for $A^+$ with respect to any other semigroup $S$ to find that if $\varepsilon$ is satisfied by $A^+$ (under canonical parameter evaluation, with free dependency), then it is satisfied by $S$.
\end{proof}

\section{EHP-classes requiring both types of quantifier alternation }

An open question in this new theory of EHP-classes is: For any $n$,
does there exist an EHP-class $\mathcal{C}$ such that in any basis for
$\mathcal{C}$, $n$ alternations of the existential quantifiers $\forall$
and $\exists$ are \emph{necessary }in at least one equation of the
basis? 

Example 2.3(iii) of \cite{higjac} is the EHP-class $\mathcal{C}$ defined
by:
\[
(\exists y)(\forall a)(\exists x,z)\colon  a=xyz,
\]
which is the class of semigroups $S$ that have a maximum $\mathscr{J}$-class
$J$ such that $S/(S-J)$ is not a null semigroup. It is proved there
that $\mathcal{C}$ cannot be defined by equations systems exclusively of the type $(\forall \cdots )(\exists \cdots)$  nor by systems with the reverse order of quantifiers $(\exists \cdots)(\forall \cdots)$.  

The strategy for proving this type of result is two-fold. To show
that $(\forall\dots)(\exists\dots)$ quantification is not possible
we find a chain of semigroups $S_{1}\subseteq S_{2}\subseteq\dots$,
with each $S_{n}\in\mathcal{C}$, such that the semigroup union $S=\cup_{n=1}^{\infty}S_{n}\not\in\mathcal{C}$.
It follows from this that $\mathcal{C}$ cannot be captured by an equation
system based on a $(\forall\dots)(\exists\dots)$ quantification
as that would imply that $S\in\mathcal{C}$ as well. (This phenomenon
we have already observed in Section~\ref{sec:classical} in the context of the EHP-class
of Monoids.)

Next we wish to show that definition by a $(\exists\dots)(\forall\dots)$
quantification for $\mathcal{C}$ is also impossible. This is done by
identifying a semigroup chain $S_{1}\subseteq S_{2}\subseteq\dots$,
where no member of the chain lies in $\mathcal{C}$, yet their union
$S=\cup_{n=1}^{\infty}S_{n}$ is a semigroup in $\mathcal{C}$. Given
that $S\in\mathcal{C}$, if $\mathcal{C}$ possessed a quantification of
the form $(\exists\dots)(\forall\dots)$, it would follow that $S_{n}\in\mathcal{C}$
for all sufficiently large $n$. 

Our Example \ref{eg:4.1} is complementary to the previous one in that it is
defined by quantification of the type $(\forall\dots)(\exists\dots)(\forall\dots)$.

We are working with the class of Semigroups throughout and all quantifications
are assumed to take place in some arbitrary semigroup $S$. Letters
at the front (resp. end) of the alphabet $a,b,c,$ (resp. $x,y,z$)
denote parameters (resp. variables) in a given equation, typically
written $e:p=q$.

\begin{example}\label{eg:4.1}
Let $\mathcal{C}$ be the EHP-class defined by 
\begin{equation}
\mathcal{C}:\,(\forall a)(\exists x)(\forall b)\colon axb=abx.\label{eq:22}
\end{equation}
Then $\mathcal{C}$ cannot be defined by equation systems of the type $(\forall \cdots )(\exists \cdots)$  nor by systems with the reverse order of quantifiers $(\exists \cdots)(\forall \cdots)$.  
\end{example}
\begin{proof}
We prove the claim by applying the strategy we have just outlined. 

Let $F_{A}$ be the free semigroup on the infinite set of generators
\[
A=\{a_{1},a_{2},\dots,x_{1},x_{2},\dots\}.
\]
Denote the finite subset $\{a_{1},\dots,a_{n},x_{1},\dots,x_{n}\}$
of $A$ by $A_{n}$. Let $\rho$ be the congruence on $F_{A}$ generated
by
\[
\rho^{0}=\{(b_{i}x_{j},x_{j}b_{i}):\,\text{where\,\ensuremath{\,b_{i}\in\{a_{i},x_{i}\},\,}}i\leq j\},
\]
and put $S=F_{A}/\rho$. Let $S_{n}\leq S$, where $S_{n}=\langle a_{1}\rho,\dots,a_{n}\rho,x_{1}\rho,\dots,x_{n}\rho\rangle$.
In the following argument we suppress the symbol $\rho$ and write
$a\in S_{n},b\in S$ and such like to stand for $a\rho\in S_{n},b\rho\in S$. 

Thus we have a semigroup chain $S_{1}\leq S_{2}\leq\dots\leq S_{n}\leq\dots\leq S$.
Observe that $S_{n}\in\mathcal{C}$ as for any $a,b\in S_{n}$ we put
$x=x_{n}$ and note that $axb=abx$ as $x_{n}$ commutes with each
member of $A_{n}$. However we now show that $S\not\in\mathcal{C}$.
Put $a=a_{1}$ and let $x\in S$. Then $x\in S_{n}$ for some $n$.
Put $b=a_{n+1}$ and consider $axb=a_{1}xa_{n+1}$. Since $a_{n+1}$
commutes only with $x_{j}$ where $j\geq n+1$, we infer that any
word $w\in F_{A}$ such that $a_{1}xa_{n+1}=w$ in $S$ has the form
$w=w_{1}a_{n+1}$ where $w_{1}$ is the result of a permutation of
the letters of $a_{1}x$. In particular $a_{1}xa_{n+1}\neq a_{1}a_{n+1}x$
in $S$. Hence $S$ does not satisfy the equation system \eqref{eq:22}. Therefore
$\mathcal{C}$ cannot be defined by equations using only $(\forall\dots)(\exists\dots)$
quantification. 

Next we wish to show that definition by a $(\exists\dots)(\forall\dots)$
quantification for $\mathcal{C}$ is also impossible. This is done by
identifying a semigroup chain $S_{1}\leq S_{2}\leq \cdots$,
where no member of the chain lies in $\mathcal{C}$, yet their union
$S=\cup_{n=1}^{\infty}S_{n}$ is a semigroup in $\mathcal{C}$. The existence
of such a chain together with the chain of the previous paragraph establishes our claim. 

Let $F_{n}$ be the free semigroup on $A_{n}=\{a_{1},\dots,a_{n},x_{1},\dots,x_{n}\}$
$(n\geq1)$ and let $F_{A}$ denote the free semigroup on $A$$=\cup_{n=1}^{\infty}A_{n}$.
Let $\rho$ be the congruence on $F_{A}$ generated by $\rho^{0}$
where
\[
\rho^{0}=\{(ax_{n+1}b,\,abx_{n+1}),\,n\geq1,\,a\in F_{n},\,b\in F_{A}\}
\]
and put $S=F_{A}/\rho$. Then $S\in\mathcal{C}$ for let us take any
$a\rho\in S$. Then $a\in F_{n}$ for some $n\geq1$. Take $x=x_{n+1}$
and let $b\rho\in S$. Then $(ax_{n+1}b,\,abx_{n+1})\in\rho^{0}$,
so that in $S$ we have $axb=abx$, and therefore $S\in\mathcal{C}$. 

Now define $S_{n}=\{a\rho:a\in F_{n}\}$. Then $S_{n}\leq S$ and
we have the semigroup chain $S_{1}\leq S_{2}\leq \cdots\leq  S_{n}\leq S_{n+1}\leq\cdots$,
with $S$ being the union of this chain. Now take $a=a_{n}$ and consider
$a\rho$, $x\rho,b\rho\in S_{n}$. Then $a\rho x\rho b\rho=(a_{n}xb)\rho$.
Since $a_{n}\not\in F_{m}$ for any $m<n$, the only $\rho^{0}$ pairs
involving a word with initial letter $a_{n}$ contain some letter
$x_{n+m}\not\in A_{n}$ $(m\geq1)$. Hence $a_{n}xb\neq a_{n}bx$
in $S_{n}$, for any $b$ that is not a power of $x$, from which
we infer that $S_{n}$ does not satisfy the equation system \eqref{eq:22}.
We conclude that $S_{n}\not\in\mathcal{C}$ for all $n\geq1$ but, as
we have witnessed, $S\in\mathcal{C}$. 

We therefore conclude that $\mathcal{C}$ cannot be
defined by a basis of equation systems of the type $(\forall \cdots )(\exists \cdots)$  nor of the type $(\exists \cdots)(\forall \cdots)$.  
\end{proof}

ACKNOWLEDGEMENTS We would like to thank both referees for their constructive comments, in particular those concerning Propositions \ref{pro:2.2.2} and \ref{pro:2.2.4}.

\end{document}